\newtheorem{theorem}{Theorem}[section]
\newtheorem{proposition}[theorem]{Proposition}
\newtheorem{lemma}[theorem]{Lemma}
\newtheorem{claim}[theorem]{Claim}
\newtheorem*{claim*}{Claim}
\newtheorem{corollary}[theorem]{Corollary}
\newtheorem{Main Conjecture}[theorem]{Main Conjecture}
\newtheorem{conjecture}[theorem]{Conjecture}
\newtheorem{problem}[theorem]{Problem}
\theoremstyle{remark}
\newtheorem{remark}[theorem]{Remark}
\theoremstyle{plain}
\newcommand\cD{\operatorname{IP}}
\newcommand\cT{\operatorname{TP}}
\renewcommand{\lg}{\mathfrak{g}}
\newcommand\lgss{{\lg\operatorname{-sat}}}
 \def\cD{\mathcal{D}}
\def\cT{\mathcal{T}}
\newcommand{\cellsize}{11}
\newlength{\cellsz} \setlength{\cellsz}{\cellsize\unitlength}
\newsavebox{\cell}
\sbox{\cell}{\begin{picture}(\cellsize,\cellsize)
\put(0,0){\line(1,0){\cellsize}}
\put(0,0){\line(0,1){\cellsize}}
\put(\cellsize,0){\line(0,1){\cellsize}}
\put(0,\cellsize){\line(1,0){\cellsize}}
\end{picture}}
\newcommand\cellify[1]{\def\thearg{#1}\def\nothing{}%
\ifx\thearg\nothing
\vrule width0pt height\cellsz depth0pt\else
\hbox to 0pt{\usebox{\cell} \hss}\fi%
\vbox to \cellsz{
\vss
\hbox to \cellsz{\hss$#1$\hss}
\vss}}
\newcommand\tableau[1]{\vtop{\let\\\cr
\baselineskip -16000pt \lineskiplimit 16000pt \lineskip 0pt
\ialign{&\cellify{##}\cr#1\crcr}}}
\newcommand{\excise}[1]{}
\renewcommand{\lg}{\mathfrak{g}}
\title{On components of the
tensor square of a Weyl module}
\author{Shiliang Gao}
\address{Dept.~of Mathematics, University of Illinois at Urbana-Champaign, Urbana, IL 61801}
\email{sgao23@illinois.edu}
\author{Dinglong Wang}
\address{Dept.~of Mathematics, University of Illinois at Urbana-Champaign, Urbana, IL 61801}
\email{dw18@illinois.edu}
\date{\today}
\begin{document}
\pagestyle{plain}
\maketitle
\begin{abstract}
    For a simple Lie algebra $\mathfrak{g}$ of type $A_n,B_n,C_n$ or $D_n$, we give a characterization of the set of dominant integral weights $\lambda$ such that for any rational point $\mu$ in the fundamental Weyl chamber, $2\lambda-\mu$ is a non-negative rational combination of the simple roots if and only if $V_{m\mu}\subseteq V_{m\lambda}\otimes V_{m\lambda}$ for some positive integer $m$. 
\end{abstract}
\section{Introduction}\label{sec:intro}
    
Let $\mathfrak g$ be a simple Lie algebra of rank $n$ over $\mathbb{C}$ with Borel subalgebra $\mathfrak{b}$ and Cartan subalgebra $\mathfrak{h}\subset \mathfrak{b}$. Let $\Phi\subset \mathfrak{h}^*$ be the root system, $W$ be the Weyl group, $\Pi_{\mathfrak{g}} = \{\alpha_1,\ldots,\alpha_n\}$ be its simple roots, $\{\omega_1,\ldots,\omega_n\}$ be the corresponding fundamental weights and $\rho$ be the half sum of positive roots. Let $\Lambda^+$ be the set of dominant integral weights and set $\Lambda_{\mathbb{Q}}^{+} = \Lambda^+\otimes \mathbb{Q}_{\geq 0}$.

For any $\lambda\in \Lambda^+$, let $V_\lambda$ be the irreducible representation of $\mathfrak{g}$ with highest weight $\lambda$. $V_{\lambda}$ admits a weight space decomposition given by 
$$V_{\lambda} = \bigoplus_{\mu\in \Lambda^+} V_{\lambda}(\mu).$$ 
For $\lambda,\mu \in \Lambda^+$, we write $\lambda\geq \mu$ if the weight space 
$V_{\lambda}(\mu)$ is non-empty.
Equivalently, this is when $\lambda-\mu = \sum_{i = 1}^n c_i \alpha_i$ where $c_i\in \mathbb{Z}_{\geq 0}$ for all $i\in [n]$. We extend this partial order to $\Lambda_{\mathbb{Q}}^+$ where we only require $c_i\in \mathbb{Q}_{\geq 0}$.

A long-standing conjecture by Kostant states that:
\begin{conjecture}[Kostant]\label{conj:kostant}
    For $\mu\in \Lambda^+, V_{\mu}\subseteq V_{\rho}\otimes V_{\rho}$ if and only if $2\rho \geq \mu$.
\end{conjecture}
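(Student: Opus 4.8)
The plan is to split Kostant's conjecture into a rational (saturated) form and an integral refinement of it. The forward implication is elementary: if $V_\mu\subseteq V_\rho\otimes V_\rho$ then $\mu$ is a weight of $V_\rho\otimes V_\rho$, and since every weight of $V_\rho$ is $\leq\rho$ in the root order, every weight of the tensor square is $\leq 2\rho$; thus $2\rho\geq\mu$, and running the same estimate with $m\rho,m\mu$ shows that the set
\[
S:=\{\nu\in\Lambda_{\mathbb Q}^+ : V_{m\nu}\subseteq V_{m\rho}\otimes V_{m\rho}\ \text{for some}\ m\in\mathbb Z_{>0}\},
\]
a convex cone, is contained in the rational polytope $\cQ_\rho:=\{\nu\in\Lambda_{\mathbb Q}^+ : 2\rho\geq\nu\}$.

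Stage one is the reverse inclusion $\cQ_\rho\subseteq S$, which with the easy direction gives $S=\cQ_\rho$, i.e.\ the rational form of the conjecture. As $S$ is convex, it suffices to place every vertex of $\cQ_\rho$ in $S$. Two vertices are immediate: $\nu=2\rho$ is the Cartan component of $V_\rho\otimes V_\rho$, and $\nu=0$ lies in $S$ because $\rho$ is self-dual --- $-w_0$ permutes the fundamental weights, hence fixes $\rho=\sum_i\omega_i$, so $V_\rho\cong V_\rho^*$ and $(V_\rho\otimes V_\rho)^{\mathfrak g}\neq 0$. For the remaining vertices I would invoke the Parthasarathy--Ranga Rao--Varadarajan theorem, which gives $V_{\overline{\rho+w\rho}}\subseteq V_\rho\otimes V_\rho$ for every $w\in W$, where $\overline{\rho+w\rho}$ is the dominant element of the orbit $W(\rho+w\rho)$. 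Then $\overline{\rho+w\rho}\in\cQ_\rho\cap S$ for all $w$, and the key claim is
\[
\cQ_\rho=\operatorname{conv}\{\overline{\rho+w\rho} : w\in W\};
\]
the inclusion $\supseteq$ is free (convexity of $\cQ_\rho$), while $\subseteq$ reduces to checking that every vertex of $\cQ_\rho$ --- a point where $n$ of its defining inequalities (chamber walls, together with the facets ``the coefficient of $\alpha_i$ in $2\rho-\nu$ vanishes'') are tight --- is of the form $\overline{\rho+w\rho}$. This is a finite root-system computation, done uniformly or case-by-case in types $A$--$D$; alternatively one verifies directly that no Belkale--Kumar inequality for $(\rho,\rho,\cdot)$ cuts deeper than $\nu\leq 2\rho$.

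Stage two is the descent to $m=1$: stage one gives, for $\mu\in\cQ_\rho\cap\Lambda^+$, some $m$ with $V_{m\mu}\subseteq V_{m\rho}\otimes V_{m\rho}$, and we need $m=1$. Consider the stretched multiplicity $P_\mu(m)=\dim\Hom_{\mathfrak g}(V_{m\mu},V_{m\rho}\otimes V_{m\rho})$, a (quasi-)polynomial in $m$; the point is to exclude a gap at $m=1$. In type $A$ this is immediate from the Knutson--Tao saturation theorem (the Littlewood--Richardson cone is saturated), so the rational statement already forces $P_\mu(1)>0$. In types $B,C,D$, where no such theorem holds, I would exploit the extreme position of $\rho$ --- $\langle\rho,\alpha_i^\vee\rangle=1$ for every $i$, so $\rho$ meets each reflecting wall minimally --- to promote the vertex analysis of stage one into an explicit construction: for instance a Littelmann-path or crystal model for $V_\rho\otimes V_\rho$ in which a highest-weight vector of weight $\mu$ is exhibited by hand for each dominant $\mu\leq 2\rho$.

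The de-saturation in stage two is where I expect the real difficulty. The cone-theoretic and PRV input of stage one yields the ``for some $m$'' statement cleanly --- and this saturated regime is precisely what the present paper treats --- but descending to $m=1$ requires genuinely integral information about $V_\rho\otimes V_\rho$, and outside type $A$ there is no general saturation theorem to fall back on; producing $P_\mu(1)>0$ directly, type by type, is the crux, and is presumably why Kostant's conjecture remains open in general.
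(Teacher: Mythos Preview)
The paper does not prove Conjecture~\ref{conj:kostant}; it is stated there as an open conjecture, with the remarks that it is known in type $A$ (Berenstein--Zelevinsky) and that Chiriv\`i--Kumar--Maffei established only the saturated weakening. There is thus no proof in the paper to compare your proposal against.

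Your stage one is essentially correct and recovers exactly the CKM result the paper cites. The key identity you need is $v_I(2\rho)=\rho+w_0^I\rho$: writing $\rho=\rho_I+\rho^I$ with $\rho^I$ fixed by $W_I$ and $w_0^I\rho_I=-\rho_I$, both sides equal $2\rho^I$, and this weight is already dominant. PRV then places every vertex of $\cD_{2\rho}$ in $\cT_\rho$, and convexity finishes. This is a clean and valid route to the saturated statement $\cD_{2\rho}=\cT_\rho$.

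Your stage two, however, is not a proof but a declaration of intent. You correctly note that outside type $A$ no saturation theorem is available, and your proposed remedy---an explicit crystal or Littelmann-path construction of a highest-weight vector of each dominant $\mu\leq 2\rho$ inside $V_\rho\otimes V_\rho$---is precisely the missing ingredient that keeps the conjecture open in types $B$, $C$, $D$. So your proposal is an accurate map of the problem rather than a solution: the forward direction and the saturated converse are fine, but the descent to $m=1$ remains a genuine gap, as you yourself acknowledge in your final paragraph.
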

In the case of $\mathfrak{g} = \mathfrak{sl}_{n+1}$, Conjecture~\ref{conj:kostant} was proved by Berenstein-Zelevinsky \cite{BZ92}. Chiriv\`i-Kumar-Maffei \cite{CKM17} proved a weakening of Kostant's conjecture. That is, if $\mu\in \Lambda^+$ then $2\rho\geq \mu$ if and only if $V_{m\mu}\subseteq V_{m\rho}\otimes V_{m\rho}$ for some $m\in \mathbb{Z}_{>0}$. In fact, the latter statement is also equivalent to $V_{d\mu}\subseteq V_{d\rho}\otimes V_{d\rho}$ where $d$ is a saturation factor for $\mathfrak{g}$. 
For $\mathfrak{g}$ of type $A_n$, $d = 1$ by the work of Knutson-Tao \cite{KT}. For $\mathfrak{g}$ of type $B_n,C_n$, $d$ can be taken to be $2$ by result of Belkale-Kumar \cite{BK10}, Sam \cite{Sam}, and Hong-Shen \cite{HS15}. For $\mathfrak{g}$ of type $D_n$, $d$ can be taken to be $2$ by work of Sam \cite{Sam}. 
We refer the readers to \cite[Section~10]{K14} for definition and further discussion of the saturation factor. The work of Jeralds-Kumar \cite{JK23} extends the aforementioned work to untwisted affine Kac–Moody Lie algebras and proves Conjecture~\ref{conj:kostant} in affine type A.

In this paper, we study a variation of the result in \cite{CKM17}:
\begin{problem}\label{problem:main}
 Characterize the set of $\lambda\in \Lambda^+$ such that for all $\mu\in \Lambda_{\mathbb{Q}}^+$,
\begin{equation}\label{eqn:mainQ}
    2\lambda \geq \mu \iff V_{m\mu}\subseteq V_{m\lambda}\otimes V_{m\lambda} \text{ for some $m\in \mathbb{Z}_{>0}$}.
\end{equation}
\end{problem}
Our main theorem is a solution to Problem~\ref{problem:main} when $\mathfrak{g}$ is of type $A_n,B_n,C_n$ or $D_n$. 
\begin{theorem}\label{thm:main}
    $\lambda$ satisfy \eqref{eqn:mainQ} if and only if
    \begin{itemize}
        \item $\lambda = N\rho$ for some $N\in \mathbb{Z}_{\geq 0}$ when $\mathfrak{g}$ is of type $A_n$ or $D_n$;
        \item $\lambda = N\rho+k\omega_n$ for $N\in \mathbb{Z}_{\geq 0}$ and $k\in \mathbb{Z}_{\geq -N}$ when $\mathfrak{g}$ is of type $B_n$ or $C_n$.
    \end{itemize}
\end{theorem}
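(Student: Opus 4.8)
The plan is to analyze the two implications in \eqref{eqn:mainQ} separately, noting that one direction is essentially free. Indeed, if $V_{m\mu}\subseteq V_{m\lambda}\otimes V_{m\lambda}$ for some $m$, then $m\mu$ is a weight of $V_{m\lambda}\otimes V_{m\lambda}$, hence $m\mu \leq 2m\lambda$, which gives $2\lambda\geq\mu$ after dividing by $m$. So the content of \eqref{eqn:mainQ} is the forward implication: $2\lambda\geq\mu \implies V_{m\mu}\subseteq V_{m\lambda}\otimes V_{m\lambda}$ for some $m$. By the saturation results quoted in the introduction (type $A$: Knutson--Tao; types $B,C,D$: Belkale--Kumar, Sam, Hong--Shen), this membership for some $m$ is equivalent to the statement that $(m\lambda,m\lambda,w_0(-m\mu))$ lies in the saturated tensor cone for all large $m$, i.e. that $(\lambda,\lambda,w_0(-\mu))$ lies in the \emph{rational} tensor cone $\Tensorcone$. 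Thus Problem~\ref{problem:main} reduces to a purely polyhedral question: for which $\lambda$ is it true that the ``order ideal'' $\{\mu\in\Lambda_{\QQ}^+ : 2\lambda\geq\mu\}$ is contained in the slice $\{\mu : (\lambda,\lambda,w_0(-\mu))\in\Tensorcone\}$?

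First I would record the ``only if'' direction by exhibiting, for each $\lambda$ \emph{not} of the asserted form, an explicit $\mu$ with $2\lambda\geq\mu$ but $(\lambda,\lambda,w_0(-\mu))\notin\Tensorcone$. The natural candidates are weights $\mu$ close to the top $2\lambda$, since the tensor cone near its ``apex'' is governed by a small number of inequalities (the Horn/Belkale--Kumar inequalities indexed by Schubert classes, cf.\ \cite{K14}). Concretely, writing $2\lambda$ in the basis of fundamental weights, if some coordinate of $2\lambda$ is not the corresponding coordinate of a multiple of $\rho$ (adjusted by $\omega_n$ in types $B,C$), I expect to subtract a suitable simple root or small dominant weight from $2\lambda$ to land outside one of these facet inequalities while staying dominant and $\leq 2\lambda$. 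The case analysis here is where the type-dependence $N\rho$ versus $N\rho+k\omega_n$ enters: in types $B_n,C_n$ the fundamental weight $\omega_n$ is ``minuscule-like'' in a way that keeps the relevant inequalities satisfied, whereas in types $A_n,D_n$ no such extra freedom exists.

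For the ``if'' direction, I would show that when $\lambda=N\rho$ (resp.\ $N\rho+k\omega_n$), \emph{every} dominant $\mu\leq 2\lambda$ satisfies all the defining inequalities of $\Tensorcone$ for the triple $(\lambda,\lambda,w_0(-\mu))$. The key point is that for $\lambda$ a multiple of $\rho$, the relevant inequalities, after normalization, reduce to inequalities of the form ``$\rho$-pairing against a Schubert class is bounded,'' and these are automatically implied by dominance of $\mu$ together with $2\lambda-\mu\in\sum\QQ_{\geq 0}\alpha_i$ --- this is exactly the mechanism behind the Chiriv\`i--Kumar--Maffei result \cite{CKM17}, which is the case $\lambda=\rho$, and I would leverage their argument (or the underlying fact that $\langle\rho,\alpha^\vee\rangle=1$ for simple coroots makes all Horn-type inequalities slack) essentially verbatim, then check that adding the $k\omega_n$ term in types $B,C$ preserves all inequalities by a direct computation using the explicit description of $\omega_n$ and $w_0$.

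The main obstacle I anticipate is the ``only if'' direction: proving that the asserted list is \emph{complete} requires ruling out all other $\lambda$, which means for each such $\lambda$ constructing an explicit violating $\mu$ and verifying it fails a specific tensor-cone inequality. This demands a good handle on the facets of $\Tensorcone$ near the apex $\{\mu\leq 2\lambda\}$ in each classical type --- equivalently, on the minimal Schubert classes whose Belkale--Kumar inequality is tight at $(\lambda,\lambda,0)$ --- and the bookkeeping will differ across types $A_n$, $B_n/C_n$, and $D_n$. The ``if'' direction should be comparatively routine given \cite{CKM17}, modulo the $\omega_n$-bookkeeping in types $B$ and $C$.
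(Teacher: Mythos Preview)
Your high-level plan is sound and matches the paper's architecture: reduce to the polyhedral question $\cD_{2\lambda}\overset{?}{=}\cT_\lambda$, use \cite{CKM17} as the backbone of the ``if'' direction, and exhibit violating $\mu$'s for the ``only if'' direction. But two genuine gaps separate the outline from a proof.

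\textbf{The ``only if'' direction.} Your heuristic of taking $\mu$ ``close to the top $2\lambda$'' and subtracting a simple root is not how the argument goes, and in type $D$ it would not suffice. The paper's key tool---which you do not mention---is the explicit description of the \emph{vertices} of $\cD_{2\lambda}$ as Weyl-group averages $v_I(2\lambda)=\frac{1}{|W_I|}\sum_{w\in W_I}w\cdot 2\lambda$ (Besson--Jeralds--Kiers). Since $\cT_\lambda\subseteq\cD_{2\lambda}$, equality fails iff some vertex lies outside $\cT_\lambda$. For $\lambda$ with $\lambda_m+\lambda_{m+2}\neq 2\lambda_{m+1}$ the vertex $v_{\{m,m+1\}}(2\lambda)$ violates one of two explicit Horn inequalities; a Berenstein--Zelevinsky reduction (Lemma~\ref{lemma:multg0}) then transports this from $\mathfrak{sl}_n$ to types $B,C,D$. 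More seriously, in type $D_n$ one must still rule out $\lambda=N\rho+k\omega_n$ with $k\neq 0$, and here the witnessing $\mu$ is the \emph{opposite} of ``near the apex'': it is $\mu=0$, and showing $0\notin\cT_\lambda$ requires a Schubert-calculus computation in $SO(6)/P$ carried out via the isomorphism $D_3\cong A_3$. Your apex-based heuristic gives no indication of this.

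\textbf{The ``if'' direction in types $B,C$.} You describe the $\omega_n$ adjustment as ``routine bookkeeping,'' but it is the most delicate part of the paper. The argument does not simply add $k\omega_n$ to the CKM inequality; rather it decomposes $\lambda=N(\rho-\omega_n)+(k+N)\omega_n$ and proves $\cD_{2(\rho-\omega_n)}=\cT_{\rho-\omega_n}$ and $\cD_{2\omega_n}=\cT_{\omega_n}$ separately. The first requires a new identity, namely that $(\omega_n+u^{-1}\omega_n+v^{-1}\omega_n+w^{-1}\omega_n)(x_P)=0$ for every Belkale--Kumar triple $(u,v,w)$, and the proof of this identity uses the combinatorial structure of the \emph{extended} Horn inequalities of \cite{NL2,NL3} (specifically the cardinality condition $|A'|+|B'|+|C'|=r$). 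This is not a consequence of $\langle\rho,\alpha^\vee\rangle=1$ and would not fall out of a ``direct computation using the explicit description of $\omega_n$ and $w_0$.''
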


Let 
$\mathcal{K}(\lg)_{\mathbb{Q}}$
be the set of rational points in the \emph{Kostka cone} associated to $\mathfrak{g}$:
\[\mathcal{K}(\lg)_{\mathbb{Q}}:= \{(\lambda,\mu)\in (\Lambda_{\mathbb{Q}}^+)^2: \lambda\geq \mu\},\]
and define the 
\emph{saturated tensor cone} to be
\[\lgss := \{(\lambda,\mu,\nu)\in (\Lambda^+_{\mathbb{Q}})^3: mV_{\nu}\subseteq mV_{\lambda}\otimes mV_{\mu} \text{ for some }m\in \mathbb{Z}_{>0}\}.\]
There has been significant interests in understanding the structure of the Kostka cone and the saturated tensor cone (see \emph{e.g.} \cite{BJK21,burrull2023dominant, K14} and the references therein). Theorem~\ref{thm:main} can be understood as studying certain affine slices of the two cones as follows.

Fix $\lambda\in \Lambda^+$, define the \emph{dominant weight polytope} to be the affine slice of $\mathcal{K}(\mathfrak{g})_{\mathbb{Q}}$ given by
\[\mathcal{D}_{2\lambda} := \{\mu\in \Lambda_{\mathbb{Q}}^+:(2\lambda,\mu)\in \mathcal{K}(\lg)_{\mathbb{Q}}\}.\]
Define similarly the \emph{tensor polytope} to be the affine slice of $\lgss$:
\[\mathcal{T}_{\lambda} = \{\mu\in \Lambda_{\mathbb{Q}}^+: (\lambda,\lambda,\mu)\in \lgss\}.\]
Problem~\ref{problem:main} is then equivalent
 to the following:
\begin{problem}\label{q:main}
    Characterize the set of $\lambda\in \Lambda^+$ such that $\cD_{2\lambda} = \cT_{\lambda}$.
\end{problem}

We note that $\cT_{\lambda}\subseteq \cD_{2\lambda}$ for all $\lambda\in \Lambda^+$. Our strategy is to study the vertices of $\cD_{2\lambda}$ and check if they lie in $\cT_{\lambda}$. The same line of thinking has also been explored in recent work of Boysal \cite{boysal2023kostants}.


\section{notation and background}
\subsection{Root system, weights and Weyl groups}\label{sec:root}
Following Section~\ref{sec:intro}, we continue with more background on root systems in classical Lie types.

We adopt the following convention on root systems for classical types:
\begin{itemize}
    \item Type $A_{n-1}$: $\alpha_i = e_i-e_{i+1},\omega_i = \sum_{j = 1}^i e_j$ for $i \in [n-1]$; 
    \item Type $B_n$: $\alpha_i = e_i-e_{i+1},\omega_i = \sum_{j=1}^{i}e_j$ for $i\in [n-1]$, $\alpha_n = e_n$ and $\omega_n = \frac{1}{2}\sum_{j= 1}^n e_j$;
    \item Type $C_n$: $\alpha_i = e_i-e_{i+1}$ for $i\in [n-1]$, $\alpha_n = e_n$ and $\omega_i = \sum_{j=1}^{i}e_j$ for $i\in [n]$;
    \item Type $D_n$: $\alpha_i = e_i-e_{i+1}$ for $i\in [n-1]$, $\alpha_n = e_{n-1}+e_{n}$, $\omega_i = \sum_{j = 1}^i e_j$ for $i\in [n-2]$, $\omega_{n-1} = \sum_{j=1}^{n-1}\frac{1}{2}e_j-\frac{1}{2}e_{n}$ and $\omega_{n} = \frac{1}{2}\sum_{j=1}^n e_j$.
\end{itemize}
Let $\mathfrak{g}$ be of type $A_n$ through $D_n$, using the above convention, we will identify $\lambda = \sum_{i=1}^n\lambda_i e_i\in \Lambda^+$ with the sequence $(\lambda_1,\lambda_2,\ldots,\lambda_n)$. Let $\langle -,-\rangle$ be the inner product on $\mathfrak{h}$ so that the fundamental weight $\omega_i$ is dual to the simple coroots, i.e. $\langle \omega_i,\alpha_j^\vee\rangle = \delta_{i,j}$. 

\subsection{Dominant weight polytopes and their vertices} In \cite{BJK21}, the authors studied the dominant weight polytope $\cD_{\lambda}$ to understand the extremal rays of $\mathcal{K}(\mathfrak{g})$. Here we recall their result on the vertices of $\cD_\lambda$:
\begin{theorem}[\cite{BJK21}, Proposition~3.9]\label{thm:IPvertex}
    For $I\subseteq [n]$, let $W_I$ be the parabolic subgroup of $W$ generated by $\{s_{\alpha_i}:i\in I\}$. Then for any $\lambda\in \Lambda^+$, the vertices of $\cD_\lambda$ is given by
    \[v_I(\lambda) = \frac{1}{|W_{I}|}\sum_{w\in W_I}w\cdot \lambda.\]
\end{theorem}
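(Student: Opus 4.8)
The plan is to prove Theorem~\ref{thm:IPvertex}, namely that the vertices of the dominant weight polytope $\cD_\lambda = \{\mu \in \Lambda^+_{\QQ} : \lambda \geq \mu\}$ are precisely the points $v_I(\lambda) = \frac{1}{|W_I|}\sum_{w \in W_I} w\cdot\lambda$ for $I \subseteq [n]$. First I would set up the two defining systems of inequalities: dominance $\langle \mu, \alpha_i^\vee\rangle \geq 0$ for $i \in [n]$, and the root-cone condition $\lambda - \mu = \sum_i c_i\alpha_i$ with $c_i \geq 0$, which after inverting the Cartan matrix becomes a second family of linear inequalities $\langle \lambda - \mu, \varpi_i^\vee \rangle \geq 0$ (fundamental coweights, up to the positive entries of the inverse Cartan matrix). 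So $\cD_\lambda$ is cut out inside the affine space $\{\mu\}$ by $2n$ inequalities, and a vertex is determined by a subset of them being tight whose gradients span. Indexing a vertex by the set $I$ of dominance inequalities $\langle\mu,\alpha_i^\vee\rangle = 0$ that are \emph{not} forced — equivalently, by which walls $\mu$ lies on — is the natural bookkeeping.

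Next I would identify the candidate vertex. The point $v_I(\lambda)$ is the average of the $W_I$-orbit of $\lambda$, hence it is $W_I$-invariant, so it satisfies $\langle v_I(\lambda), \alpha_i^\vee\rangle = 0$ for all $i \in I$; one checks it is still dominant (it is a convex combination of dominant weights, or one argues directly that $W_I$-invariance plus $\langle \cdot,\alpha_j^\vee\rangle \geq 0$ for $j \notin I$ follows from $\lambda$ dominant and the structure of $W_I$-orbits). One also checks $\lambda - v_I(\lambda) = \lambda - \frac{1}{|W_I|}\sum_{w\in W_I} w\lambda$ lies in the positive root cone: indeed for the longest element $w_{0,I}$ of $W_I$ one has $\lambda - w_{0,I}\lambda \in \ZZ_{\geq 0}\Pi_I$, and averaging over $W_I$ keeps us in the cone spanned by $\Pi_I \subseteq \Pi_{\lg}$ (this uses that each $\lambda - w\lambda$, $w \in W_I$, is a nonnegative combination of simple roots in $I$). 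So $v_I(\lambda) \in \cD_\lambda$. Conversely, given any vertex $v$ of $\cD_\lambda$, let $I = \{i : \langle v, \alpha_i^\vee\rangle = 0\}$ be the set of walls it lies on; then $v$ is $W_I$-fixed, and I claim the remaining active constraints force $v = v_I(\lambda)$. The count: a vertex needs $n$ linearly independent tight constraints. The $|I|$ dominance equalities account for the $W_I$-invariance, cutting the ambient space down to the $(n-|I|)$-dimensional space of $W_I$-invariants; the remaining $n - |I|$ tight constraints must come from the root-cone side, and the natural claim is that exactly the inequalities $\langle \lambda - v, \varpi_i^\vee\rangle \geq 0$ for $i \notin I$ become equalities. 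Solving that linear system inside the $W_I$-invariant subspace pins down $v$ uniquely, and one verifies the solution is $v_I(\lambda)$ — e.g.\ by checking $v_I(\lambda)$ satisfies $\langle \lambda - v_I(\lambda), \varpi_i^\vee\rangle = 0$ for $i \notin I$, which amounts to the statement that the "defect" $\lambda - v_I(\lambda)$ uses \emph{only} simple roots indexed by $I$, so it is orthogonal to the coweights $\varpi_i^\vee$ outside $I$ that are dual to those.

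I would then close the loop by checking these $n$ vectors — $\{\alpha_i^\vee : i \in I\}$ together with $\{\varpi_i^\vee : i \notin I\}$ — are linearly independent, which is standard (they are a "mixed" basis adapted to the parabolic $I$), so $v_I(\lambda)$ is a genuine vertex and not merely a face point, and that distinct $I$ give distinct vertices (e.g.\ via the wall set $I$ being recoverable from the vertex when $\lambda$ is regular, with the degenerate cases handled by noting $v_I = v_J$ exactly when they coincide as averages, which can be absorbed into the statement). Finally, every vertex arises this way since its wall set $I$ determines it by the argument above.

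The main obstacle I anticipate is the converse direction's rigidity step: showing that a vertex lying on exactly the walls indexed by $I$ must have its defect $\lambda - v$ supported on $\Pi_I$, i.e.\ that the only way to get enough independent tight root-cone inequalities is the "expected" set $\{i \notin I\}$. A priori a vertex could be tight on a root-cone inequality $\langle\lambda-v,\varpi_i^\vee\rangle = 0$ for some $i \in I$ as well, or fail to be tight on one with $i \notin I$, and ruling out such mismatches — essentially a combinatorial argument about which faces of the positive root cone meet which faces of the dominant cone transversally, or an appeal to the known facet structure of $\cD_\lambda$ from \cite{BJK21} — is where the real work lies. A clean way to package it is: parametrize $W_I$-invariant $\mu$ by the free coordinates $\langle \lambda - \mu, \varpi_i^\vee\rangle$ for $i \notin I$, observe these are all $\geq 0$ on $\cD_\lambda \cap \{W_I\text{-invariants}\}$, and that $v_I(\lambda)$ is the unique point where all of them vanish, hence the unique vertex of that sub-polytope sitting at the apex — and every vertex of $\cD_\lambda$ is a vertex of exactly one such slice.
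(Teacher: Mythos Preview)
The paper does not prove this statement at all: Theorem~\ref{thm:IPvertex} is quoted verbatim from \cite[Proposition~3.9]{BJK21} and used as a black box, so there is no ``paper's own proof'' to compare your proposal against.

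That said, a couple of remarks on your sketch. First, a slip: you write that $v_I(\lambda)$ is dominant because it is ``a convex combination of dominant weights,'' but the summands $w\cdot\lambda$ for $w\in W_I$ are \emph{not} dominant in general. The correct argument is the one you allude to afterwards: since $\lambda - v_I(\lambda)\in\QQ_{\ge 0}\Pi_I$ and off-diagonal Cartan entries $\langle\alpha_i,\alpha_j^\vee\rangle$ are nonpositive, one gets $\langle v_I(\lambda),\alpha_j^\vee\rangle\ge\langle\lambda,\alpha_j^\vee\rangle\ge 0$ for $j\notin I$, while $W_I$-invariance gives vanishing for $i\in I$. Second, the obstacle you flag in your last paragraph is real and is exactly where the argument in \cite{BJK21} does its work; your proposal correctly identifies the shape of the proof but does not yet close the rigidity step showing that the tight root-cone constraints at a vertex with wall set $I$ are precisely those indexed by $[n]\setminus I$. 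If you want to complete this independently of \cite{BJK21}, the cleanest route is to observe that the face of $\cD_\lambda$ where $\langle\mu,\alpha_i^\vee\rangle=0$ for all $i\in I$ is, after projecting out the $W_I$-invariants, a translate of the dominant weight polytope for the Levi of type $I$ at the weight obtained by restricting $\lambda$; its unique ``top'' vertex (where the defect coordinates in $\Pi_I$ are maximal and those outside $I$ vanish) is then forced to be $v_I(\lambda)$.
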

An immediate corollary is that the set of vertices depends linearly on $\lambda$:
\begin{corollary}[\cite{BJK21}, Corollary~3.11]\label{cor:sum}
    Any vertex of $\cD_{\lambda+\mu}$ is a sum of a vertex of $\cD_{\lambda}$ and a vertex of $\cD_{\mu}$.
\end{corollary}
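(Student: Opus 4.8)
The statement will follow almost immediately from Theorem~\ref{thm:IPvertex} once one records a single linearity observation, so the plan is short. For each fixed index set $I\subseteq[n]$, consider the map
\[
v_I\colon \Lambda_{\mathbb{Q}}^+\longrightarrow \mathfrak{h}^*,\qquad v_I(\nu)=\frac{1}{|W_I|}\sum_{w\in W_I}w\cdot\nu .
\]
This is a \emph{linear} map in $\nu$: it is an average of the linear operators $\nu\mapsto w\cdot\nu$. In particular $v_I(\lambda+\mu)=v_I(\lambda)+v_I(\mu)$ for all $\lambda,\mu\in\Lambda^+$. (This is also the step that pins down that ``$w\cdot\nu$'' must mean the ordinary $W$-action, not the $\rho$-shifted one — under the shifted action $v_I$ would only be affine and the additivity below would fail.)

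With that in hand the proof proceeds in three steps. First, apply Theorem~\ref{thm:IPvertex} to the weight $\lambda+\mu$: every vertex $v$ of $\cD_{\lambda+\mu}$ is of the form $v=v_I(\lambda+\mu)$ for some $I\subseteq[n]$. Second, invoke the linearity of $v_I$ to rewrite $v=v_I(\lambda)+v_I(\mu)$. Third, apply Theorem~\ref{thm:IPvertex} again, now to $\lambda$ and to $\mu$ separately, to see that $v_I(\lambda)$ is a vertex of $\cD_\lambda$ and $v_I(\mu)$ is a vertex of $\cD_\mu$. This displays $v$ as a sum of a vertex of $\cD_\lambda$ and a vertex of $\cD_\mu$, as claimed.

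The only genuinely load-bearing point — and the closest thing to an obstacle — is the precise content of Theorem~\ref{thm:IPvertex} that one leans on in the third step: not merely that every vertex of $\cD_\nu$ lies on the list $\{v_I(\nu):I\subseteq[n]\}$, but that every point $v_I(\nu)$ on that list is actually a vertex of $\cD_\nu$. When $\nu$ lies on a wall of the dominant chamber, distinct index sets $I$ can produce the same point and the count of distinct vertices drops; but that only collapses repetitions, it never demotes a $v_I(\nu)$ from being a vertex, so the argument survives when we pass from $\lambda+\mu$ to $\lambda$ and $\mu$. It is also worth noting what the corollary does not assert: the two summands are vertices indexed by the \emph{same} $I$, and conversely an arbitrary sum (vertex of $\cD_\lambda$) $+$ (vertex of $\cD_\mu$) need not be a vertex of $\cD_{\lambda+\mu}$.
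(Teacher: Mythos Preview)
The paper does not supply its own proof of this corollary; it simply cites it as \cite[Corollary~3.11]{BJK21}, so there is no in-paper argument to compare against. Your deduction from Theorem~\ref{thm:IPvertex} via the linearity of $v_I$ is correct and is exactly the natural (and intended) argument: linearity of the averaging operator gives $v_I(\lambda+\mu)=v_I(\lambda)+v_I(\mu)$, and Theorem~\ref{thm:IPvertex} identifies each side with a vertex of the appropriate dominant weight polytope. Your caveat about degenerate $\nu$ is well taken and correctly resolved --- the statement of Theorem~\ref{thm:IPvertex} is that the vertex set \emph{equals} $\{v_I(\nu):I\subseteq[n]\}$ (with possible repetition), so each $v_I(\nu)$ is indeed a vertex even on walls.
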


\subsection{Embedding of Dynkin diagrams and Schubert structure constants}\label{sec:proj}
For any $\Pi'\subset \Pi$, let $\Phi'\subset \Phi_{\mathfrak{g}}$ be the root subsystem generated by $\Pi'$. Let $\mathfrak{g}'\subset \mathfrak{g}$ be the subalgebra defined by
\begin{equation}\label{eqn:subalg}
    \mathfrak{g}' = \mathfrak{h}'\oplus \bigoplus_{\alpha\in \Phi'} \mathfrak{g}_{\alpha},
\end{equation}
where $\mathfrak{h}'$ is spanned by $\{h_\alpha\in \mathfrak{h}:\alpha \in \Phi'\}$. Let $p:\mathfrak{h}^*\rightarrow (\mathfrak{h}')^*$ be the natural orthogonal projection induced by $\langle-,-\rangle$.

\begin{lemma}[\cite{BZ88}, Proposition~1.3]\label{lemma:multg0}
    Let $\lambda,\nu,\mu$ be dominant integral weights of $\mathfrak{g}$ such that $\lambda+\nu-\mu\in \Phi'$. Denote $c_{p(\lambda),p(\mu)}^{p(\nu)}(\mathfrak{g}')$ the multiplicity of $V_{p(\nu)}\subset V_{p(\lambda)}\otimes V_{p(\mu)}$ as $\mathfrak{g}'$-module. Then
    \[c^{\nu}_{\lambda,\mu}(\mathfrak{g}) = c^{p(\nu)}_{p(\lambda),p(\mu)}(\mathfrak{g}').\]
\end{lemma}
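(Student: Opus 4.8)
The plan is to realize each side of the claimed identity as the dimension of a space of highest-weight vectors, and then to exhibit the two counting problems as literally the same. Recall that for a semisimple Lie algebra one has $c^\nu_{\lambda,\mu}(\lg)=\dim\Hom_{\lg}(V_\nu,V_\lambda\otimes V_\mu)$, and that this equals the dimension of the space of weight-$\nu$ vectors in $V_\lambda\otimes V_\mu$ annihilated by every $e_i$, $i\in[n]$; the same holds for $\lg'$, which is semisimple with Cartan $\mathfrak h'$ and simple roots $\Pi'$. The bridge between $\lg$ and $\lg'$ will be a \emph{defect grading}: for a weight $\eta=(\lambda+\mu)-\sum_{i=1}^n c_i\alpha_i$ of $V_\lambda\otimes V_\mu$ (with $c_i\in\ZZ_{\ge0}$) put $\delta(\eta):=\sum_{j\notin\Pi'}c_j$, and let $(V_\lambda\otimes V_\mu)_d$ be the span of the weight spaces with $\delta=d$; grade $V_\lambda$ and $V_\mu$ the same way, relative to $\lambda$ and $\mu$. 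Since every root of $\Phi'$ is supported on $\Pi'$, the subalgebra $\lg'$ preserves defect, so $V_\lambda=\bigoplus_d (V_\lambda)_d$ and $V_\lambda\otimes V_\mu=\bigoplus_d(V_\lambda\otimes V_\mu)_d$ are decompositions into $\lg'$-submodules, and $(V_\lambda\otimes V_\mu)_0=(V_\lambda)_0\otimes(V_\mu)_0$ because defects are non-negative.

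Next I would record two facts about the degree-$0$ piece. First, by PBW one factors $U(\mathfrak n^-)=U(\mathfrak r)\,U(\mathfrak m)$, where $\mathfrak m$ is spanned by the root spaces $\mathfrak g_{-\beta}$ for $\beta\in\Phi'\cap\Phi^+$ and $\mathfrak r$ by the $\mathfrak g_{-\beta}$ for $\beta\in\Phi^+\setminus\Phi'$ (each of the latter strictly increasing the defect); since $U(\mathfrak r)=\CC\oplus\mathfrak r\,U(\mathfrak r)$, this forces $(V_\lambda)_0=U(\mathfrak m)v_\lambda=U(\lg')v_\lambda$. As $v_\lambda$ is a highest-weight vector for $\lg'$ of $\lg'$-dominant integral weight $p(\lambda)$ — here $\langle p(\lambda),\alpha_i^\vee\rangle=\langle\lambda,\alpha_i^\vee\rangle$ for $i\in\Pi'$ since $\alpha_i^\vee\in\mathfrak h'$ and $p$ is the orthogonal projection — the finite-dimensional cyclic $\lg'$-module $U(\lg')v_\lambda$ is irreducible, so $(V_\lambda)_0\cong V_{p(\lambda)}$ as $\lg'$-modules, and likewise $(V_\mu)_0\cong V_{p(\mu)}$. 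Second, every $\mathfrak h$-weight occurring in $(V_\lambda\otimes V_\mu)_0$ has the form $(\lambda+\mu)-\sum_{i\in\Pi'}c_i\alpha_i$, and the $\alpha_i$ ($i\in\Pi'$) remain linearly independent in $(\mathfrak h')^*$; hence on $(V_\lambda\otimes V_\mu)_0$ the $\mathfrak h$-weight-$\xi$ subspace coincides with the $\mathfrak h'$-weight-$p(\xi)$ subspace, for every $\xi$.

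Finally I would feed in the hypothesis: write $\nu=(\lambda+\mu)-\gamma$ with $\gamma$ in the root lattice generated by $\Pi'$. Then any weight-$\nu$ vector of $V_\lambda\otimes V_\mu$ has defect $0$, so the $\nu$-weight space lies in $(V_\lambda)_0\otimes(V_\mu)_0\cong V_{p(\lambda)}\otimes V_{p(\mu)}$; and for $j\notin\Pi'$ the functional $\nu+\alpha_j$ has defect $-1$, hence is not a weight of $V_\lambda\otimes V_\mu$ at all, so $e_j$ kills the whole $\nu$-weight space automatically. Thus a weight-$\nu$ vector of $V_\lambda\otimes V_\mu$ is a $\lg$-highest-weight vector if and only if it is killed by the $e_i$ with $i\in\Pi'$, if and only if it is a $\lg'$-highest-weight vector of weight $p(\nu)$ in $V_{p(\lambda)}\otimes V_{p(\mu)}$; combined with the weight-space identification above, this makes the space of $\lg$-highest-weight vectors of weight $\nu$ in $V_\lambda\otimes V_\mu$ \emph{equal} to the space of $\lg'$-highest-weight vectors of weight $p(\nu)$ in $V_{p(\lambda)}\otimes V_{p(\mu)}$. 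Taking dimensions yields $c^\nu_{\lambda,\mu}(\lg)=c^{p(\nu)}_{p(\lambda),p(\mu)}(\lg')$, and if $\nu$ is not a weight of $V_\lambda\otimes V_\mu$ the same chain of equalities shows both sides vanish.

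I do not expect a real obstacle. The only points that take a little care are the PBW factorization behind $(V_\lambda)_0\cong V_{p(\lambda)}$ (one needs that every positive root not in $\Phi'$ has a simple-root summand outside $\Pi'$, which is immediate from $\Phi'=\Phi\cap\ZZ\Pi'$) and the bookkeeping matching $\mathfrak h$-weights with $\mathfrak h'$-weights through $p$; the conceptual heart — and the only use of the hypothesis — is the observation that $\lambda+\mu-\nu$ lying in the $\Pi'$-root lattice confines the whole computation to the defect-$0$ subspace, where the $\lg$- and $\lg'$-highest-weight conditions coincide.
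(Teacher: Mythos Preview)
The paper does not supply its own proof of this lemma; it is quoted from \cite{BZ88} and used as a black box. Your argument is correct and is in fact the standard one: grade $V_\lambda\otimes V_\mu$ by the defect (the total coefficient of simple roots outside $\Pi'$), identify the degree-zero piece with $V_{p(\lambda)}\otimes V_{p(\mu)}$ via PBW and complete reducibility of finite-dimensional $\lg'$-modules, and then observe that the hypothesis pins the $\nu$-weight space inside degree zero, where the raising operators $e_j$ for $j\notin\Pi'$ already act by zero for weight reasons, so the $\lg$- and $\lg'$-highest-weight conditions coincide. The only step worth a second glance is the irreducibility of $U(\lg')v_\lambda$, and your reasoning there is fine: a finite-dimensional highest-weight module over a semisimple Lie algebra is indecomposable (unique maximal submodule) and completely reducible, hence simple.

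One small remark: the hypothesis as printed reads ``$\lambda+\nu-\mu\in\Phi'$'', which you have silently corrected to ``$\lambda+\mu-\nu$ lies in the root lattice generated by $\Pi'$''. That is indeed what is meant and what is used downstream (cf.\ \eqref{eqn:redtosln}).
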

In particular, if $|\lambda|+|\mu| = |\nu|$ and $\lambda,\mu,\nu\in \Lambda^+$ are partitions; that is, $\lambda_i,\mu_i,\nu_i\in \mathbb{Z}_{\geq 0}$ for all $i\in [n]$, then
\begin{equation}\label{eqn:redtosln}
    c_{\lambda,\mu}^\nu(\mathfrak{g}) = c_{\lambda,\mu}^\nu(\mathfrak{sl}_n).
\end{equation}

\subsection{Inclusion of Dynkin diagrams and Schubert structure constants}

Let $G$ be the connected complex semisimple algebraic group with Lie algebra $\mathfrak{g}$. Let $B$ be the Borel subgroup of $G$ with Lie algebra $\mathfrak{b}$. The generalized flag variety $G/B$ has finitely many orbits under the left action of Borel subgroup $B_{-}$. They are indexed by $w\in W$ where $W$ is the Weyl group of $G$. The \emph{opposite Schubert varieties} $Y_w$ are the closure of these orbits. Denote $[Y_w]$ the Poincar\'e dual of the fundamental class of $Y_w$. We have
\[[Y_w]\in H^{\ell(w)}(G/B),\]
where $\ell(w)$ is the Coxeter length of $w$. The set $\{[Y_w]:w\in W\}$ form a $\mathbb{Z}$-basis of the cohomology ring $H^{*}(G/B)$.

Define the \emph{Schubert structure constant} $c^{w}_{u,v}(G/B)$ to be the structure constant of $H^{*}(G/B)$ with respect to the opposite Schubert basis $\{[Y_w]:w\in W\}$: 
\[[Y_u] \cdot [Y_v] = \sum_{w\in W}c^{w}_{u,v}(G/B)[Y_w].\]

Let $H_1$ and $H_2$ be (connected) finite Dynkin diagrams and $\iota:H_1\hookrightarrow H_2$ be an inclusion of diagrams; that is, a graph theoretic injection that respects arrows. Label the nodes of $H_1$ and $H_2$ by their corresponding simple roots 
$$\Pi_{H_1} = \{\alpha_1,\ldots,\alpha_{r(H_1)}\}\text{ and }\Pi_{H_2} = \{\beta_1,\ldots,\beta_{r(H_2)}\}$$
such that $\iota(\alpha_i) = \beta_i$ for all $i\in [r(H_1)]$.

Denote $W_1$ and $W_2$ the Weyl groups corresponding to $H_1$ and $H_2$ respectively. The inclusion $\iota$ then induces an injection 
$\iota: W_1 \hookrightarrow W_2$
by sending $s_{\alpha_i}$ to $s_{\beta_i}$. Let $X_1,X_2$ be the generalized flag varieties corresponding to $H_1,H_2$ respectively.

\begin{proposition}[\cite{ABCD}, Theorem~2.1]\label{prop:inclusion}
For any $u,v,w\in W_1$,
    \[c_{u,v}^w(X_1) = c_{\iota(u),\iota(v)}^{\iota(w)}(X_2).\]
\end{proposition}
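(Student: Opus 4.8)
The plan is to deduce Proposition~\ref{prop:inclusion} from the comparison of structure constants with Littlewood--Richardson-type (representation-theoretic) branching numbers, or more directly by a stability/functoriality argument on cohomology. I would first reduce to the case where $\iota$ exhibits $H_1$ as a full sub-diagram obtained by deleting a single node of $H_2$, since an arbitrary inclusion of connected Dynkin diagrams factors as a composite of such one-node deletions (and the identity on the remaining nodes respects arrows), and both sides of the claimed equality are manifestly multiplicative under composition of inclusions. So the core statement becomes: if $H_1 = H_2 \smallsetminus \{\beta_j\}$ for some leaf-or-interior node $\beta_j$, then $c_{u,v}^w(X_1) = c_{\iota(u),\iota(v)}^{\iota(w)}(X_2)$ for $u,v,w \in W_1$, where $W_1 = \langle s_{\beta_i} : i \neq j\rangle$ is the parabolic subgroup of $W_2$.

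The key steps I would carry out, in order. First, recall the geometric Chevalley/Schubert setup: for $X_2 = G_2/B_2$ one has the opposite Schubert classes $[Y_w]$, and for the parabolic $P \subset G_2$ corresponding to deleting $\beta_j$ there is a fibration $G_2/B_2 \to G_2/P$ whose fiber is (up to issues of Levi versus semisimple part) the flag variety $X_1$ of the Levi subgroup. Second, I would invoke the pullback–pushforward / projection-formula machinery: for $u,v,w$ in the parabolic subgroup $W_1$, the classes $[Y_u],[Y_v],[Y_w]$ on $X_2$ restrict to a fiber, and the restriction map $H^*(G_2/B_2) \to H^*(\text{fiber}) = H^*(X_1)$ sends $[Y_{\iota(w)}]$ to $[Y_w]$ and is a ring homomorphism on the subring generated by parabolic-indexed classes; combined with the fact that the structure constants are determined by the degree-$\top$ coefficient (integration over the fiber), one extracts $c_{u,v}^w(X_1)$ from $c_{\iota(u),\iota(v)}^{\iota(w)}(X_2)$. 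Third, I would handle the bookkeeping that $\iota(u)$ is the minimal-length coset representative issue is moot here because $u \in W_1 \subset W_2$ and lengths are preserved, $\ell_{W_2}(\iota(w)) = \ell_{W_1}(w)$, so the cohomological degrees match on the nose and no vanishing or dimension shift occurs.

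The main obstacle I anticipate is the discrepancy between the semisimple Levi subgroup (whose flag variety is genuinely $X_1$) and the full Levi $L$ of $P$, which has an extra central torus; one must check this torus contributes nothing to the relevant cohomology comparison, i.e. that $H^*(L/B_L) = H^*(L_{ss}/B_{L_{ss}})$ and that the Schubert basis matches under the identification $W_1 \cong W(L_{ss})$. A secondary subtlety is verifying that the restriction-to-fiber map really does send opposite Schubert classes indexed by $W_1$ to opposite Schubert classes of $X_1$ with coefficient exactly $1$ (no lower-order correction terms), which is where one uses that $Y_{\iota(w)} \cap \text{fiber}$ is, for $w \in W_1$, exactly the corresponding opposite Schubert variety $Y_w$ of $X_1$ — a transversality/intersection statement that I would cite from the standard theory of Schubert varieties in fiber bundles rather than reprove. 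Everything else (multiplicativity under composition of inclusions, length preservation, the reduction to one-node deletions) is routine once this geometric comparison is in hand.
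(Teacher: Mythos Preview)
The paper does not prove Proposition~\ref{prop:inclusion}; it is quoted without proof as Theorem~2.1 of the reference \cite{ABCD} and used as a black box (with Remark~\ref{rmk:convention} adjusting for the convention on Schubert versus opposite Schubert classes). So there is no ``paper's own proof'' to compare your attempt against.

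That said, your sketch is a reasonable outline of one standard way to establish such a result, and is in the spirit of how the cited reference proceeds. Two comments. First, your reduction to single-node deletions is unnecessary and mildly awkward (deleting an interior node may disconnect $H_1$, which conflicts with the connectedness hypothesis you are carrying around); it is cleaner to pass directly to the parabolic $P\subset G_2$ whose Levi has Dynkin diagram $H_1$ and work with the fiber $P/B_2 \cong L/B_L$ of $G_2/B_2 \to G_2/P$ in one step. Second, what you call the ``secondary subtlety'' is really the heart of the argument, and it has a clean resolution you should make explicit: for $w'\in W_2\smallsetminus W_1$ the opposite Schubert variety $Y_{w'}\subset G_2/B_2$ does not meet the identity fiber $P/B_2$ at all. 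Indeed, the $T$-fixed points of $Y_{w'}\cap P/B_2$ would be $\{w''B_2 : w''\in W_1,\ w''\geq w'\}$, but any reduced word for $w''\in W_1$ uses only simple reflections from $H_1$, so every Bruhat-subword lies in $W_1$, forcing $w'\in W_1$, a contradiction; a projective $T$-variety with no fixed points is empty. Hence the restriction map $H^*(G_2/B_2)\to H^*(L/B_L)$ annihilates $[Y_{w'}]$ for $w'\notin W_1$ and sends $[Y_{\iota(w)}]\mapsto [Y_w]$ for $w\in W_1$, from which the equality of structure constants follows immediately by comparing coefficients. Once you insert this, your argument is complete; the torus-in-Levi issue you flag is harmless since $L/B_L = L_{ss}/B_{L_{ss}}$.
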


\begin{remark}\label{rmk:convention}
    We note that the convention in \cite{ABCD} is different from the one used in \cite{BS00,CKM17}. The former studies the structure constant $c_{u,v}^w$ in the opposite Schubert basis (one where $\ell(w)$ is the codimension) whereas the later two (see also equation~\eqref{eqn:Nov27aaa}) work in the Schubert basis (where $\ell(w)$ is the dimension). The two basis are related by $[X_w] = [Y_{w_0w}]$ where $w_0$ is the longest element in the Weyl group $W$. As a result, Proposition~\ref{prop:inclusion} does not hold if we simply replace $c_{u,v}^w$ in the statement with the structure constant under Schubert basis. However, as we will see later, the inclusion of Dynkin diagram we consider here is an isomorphism and the choice of $u,v$ are self dual. In this case we can apply Proposition~\ref{prop:inclusion} in the setting of \eqref{eqn:Nov27aaa}.
\end{remark}

For the purpose of this paper, we focus on the map of the Dynkin diagrams $\iota: D_3\leftrightarrow A_3$ where we identify $\alpha_1 = e_1{-}e_2,\alpha_2 = e_2{-}e_3,\alpha_3 = e_2{+}e_3$ with $\beta_1 = e_2{-}e_3, \beta_2 = e_1{-}e_2, \beta_3 = e_3{-}e_4$ respectively. 

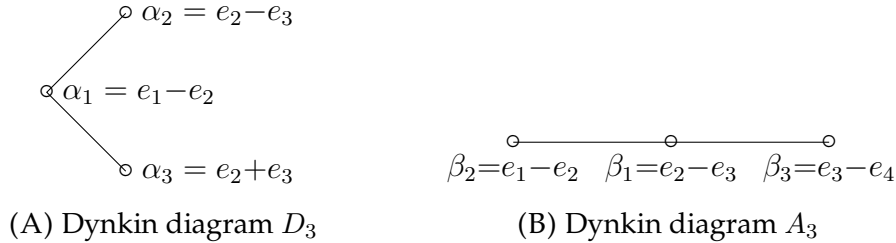
\begin{figure}[h!]
    \centering
    \subcaptionbox{Dynkin diagram $D_3$ \label{fig:D3}}[.4\textwidth]{
            \begin{tikzpicture}[scale = 0.7]

   \draw (1.5,1.5)--(0,0)--(1.5,-1.5);
    \node at (0,0) {$\circ$};
    \node at (1.5,1.5) {$\circ$};
    \node at (1.5,-1.5) {$\circ$};
    \node at (1.75,0) {$\alpha_1 = e_1{-}e_2$};
    \node at (3.25,1.5) {$\alpha_2 = e_2{-}e_3$};
    \node at (3.25,-1.5) {$\alpha_3 = e_2{+}e_3$};
    \end{tikzpicture}}
    \subcaptionbox{Dynkin diagram $A_3$ \label{fig:A3}}[.4\textwidth]{
        \begin{tikzpicture}[scale = 0.7]
    \draw (0,0)--(3,0)--(6,0);
    \node at (0,0) {$\circ$};
    \node at (3,0) {$\circ$};
    \node at (6,0) {$\circ$};
    \node at (0,-0.5) {$\beta_2{=}e_1{-}e_2$};
    \node at (3,-0.5) {$\beta_1{=}e_2{-}e_3$};
    \node at (6,-0.5) {$\beta_3{=}e_3{-}e_4$};
    \end{tikzpicture}}
        
    \caption{Identification of Dynkin diagrams of type $A_3$ and $D_3$}
    \label{fig:diagram}
        
    \end{figure}
Let $X_1 = SL(4)/B$ and $X_2 = SO(6)/B$ be the generalized flag varieties of type $A_3$ and~$D_3$.


\subsection{Inequalities defining the saturated tensor cones}

Let $\{x_i:i\in [n]\}\subset \mathfrak{h}$ be the dual to the simple roots $\{\alpha_i:i\in [n]\}$, namely $\alpha_i(x_j) = \delta_{i,j}$. For a maximal parabolic subgroup $P = P_\alpha$, let $x_P:= x_{i_P}$ where $\alpha = \alpha_{i_P}$. For $\mu\in \Lambda^+$, the dual of $\mu$ is $\mu^* = -w_0\mu$. The following characterization of $\lgss$ is due to Berenstein-Sjamaar. 
\begin{proposition}[\cite{BS00}]\label{prop: BKineq}
$(\lambda,\mu,\nu)\in \lgss$
if and only if the following inequality
\begin{equation}\label{eqn:TPineq}
    \lambda\left(u x_P\right)+\mu\left(v x_P\right)+\nu^*\left(w x_P\right) \leq 0.
\end{equation}
holds for all maximal parabolic subgroup $P \subset G$ and all triple $(u,v,w) \in (W^P)^3$ such that product of the corresponding Schubert classes in $G / P$:
\begin{equation}\label{eqn:Nov27aaa}
    \left[X_u^P\right] \cdot\left[X_v^P\right] \cdot\left[X_w^P\right]=k\left[X_e^P\right] \in H^*(G / P, \mathbb{Z}) \text{ for some }k>0.
\end{equation}
\end{proposition}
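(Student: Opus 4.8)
\textbf{Proof strategy for Proposition~\ref{prop: BKineq}.}

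The plan is to deduce this from the general Berenstein--Sjamaar theorem on momentum polytopes of projective $G$-varieties, specialized to the setting $X = G/B \times G/B \times G/B$ with a linearized line bundle. Recall that $(\lambda,\mu,\nu) \in \lgss$ precisely when the line bundle $\mathcal{L}_\lambda \boxtimes \mathcal{L}_\mu \boxtimes \mathcal{L}_{\nu^*}$ on $(G/B)^3$ admits a nonzero $G$-invariant section after passing to a positive power, since $H^0(G/B, \mathcal{L}_\eta)^* \cong V_\eta$ for dominant $\eta$ and $(V_\lambda \otimes V_\mu \otimes V_{\nu^*})^G \neq 0$ if and only if $V_\nu \subseteq V_\lambda \otimes V_\mu$. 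Thus the saturated cone $\lgss$ is identified with the rational points of the GIT/momentum cone for this action, and Berenstein--Sjamaar's description of such cones by a finite list of linear inequalities indexed by Schubert data applies directly.

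The key steps, in order, are as follows. First I would set up the momentum map picture: the momentum polytope of $(G/B)^3$ with respect to the diagonal $G$-action and the chosen polarization is cut out, by the convexity theorem, by inequalities of the form ``$\langle \text{momentum}, \xi \rangle \le 0$'' as $\xi$ ranges over one-parameter subgroups. Second, I would invoke Berenstein--Sjamaar's key reduction: it suffices to test $\xi$ of the form $\xi = w x_P$ for $P$ a maximal parabolic and $w \in W$, and moreover the relevant inequality for a triple $(u,v,w)$ of coset representatives in $W^P$ is nonredundant exactly when the corresponding Schubert intersection number in $H^*(G/P)$ is positive --- that is, when \eqref{eqn:Nov27aaa} holds. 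Third, I would carefully unwind the pairing $\mathcal{L}_\lambda$ against the one-parameter subgroup at the fixed point $uP$ to obtain the term $\lambda(u x_P)$, and similarly for the other two factors, noting that the third factor contributes $\nu^*(w x_P)$ because the line bundle there is $\mathcal{L}_{\nu^*}$; assembling the three contributions gives exactly \eqref{eqn:TPineq}. Finally, I would note that saturation is automatic here because the momentum cone is by construction the cone generated by lattice points that are achieved after some multiple, so the ``for some $m$'' in the definition of $\lgss$ matches the cone description with no loss.

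The main obstacle --- and the place where care is genuinely required --- is the bookkeeping of conventions, exactly the issue flagged in Remark~\ref{rmk:convention}: one must be consistent about whether Schubert classes are indexed so that length is dimension or codimension, whether one uses Schubert varieties or opposite Schubert varieties, and how the dualization $\nu \mapsto \nu^* = -w_0\nu$ interacts with the choice of which factor is ``dualized'' in the Borel--Weil identification. The cleanest route is to cite \cite{BS00} for the precise form \eqref{eqn:TPineq} --- this is essentially their Theorem in the $(G/B)^3$ case --- and to verify only that our normalization of $x_P$ (dual to simple roots via $\alpha_i(x_j) = \delta_{ij}$, with $x_P = x_{i_P}$ for $P = P_{\alpha_{i_P}}$) and our normalization of the Schubert basis in \eqref{eqn:Nov27aaa} agree with theirs; any discrepancy is absorbed into replacing $u, v, w$ by $w_0$-translates, which is harmless since \eqref{eqn:TPineq} must hold for all valid triples. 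I do not expect new mathematical content here beyond this reconciliation, so the proof is properly a citation to \cite{BS00} together with the translation dictionary above.
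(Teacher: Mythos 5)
Your proposal matches the paper's treatment: Proposition~\ref{prop: BKineq} is simply quoted from \cite{BS00} without proof, and your reduction to the Berenstein--Sjamaar GIT/momentum-cone theorem for the diagonal action on $(G/B)^3$ via Borel--Weil, together with the convention reconciliation, is exactly the intended justification. One small caution: positivity of the Schubert intersection number in \eqref{eqn:Nov27aaa} makes the inequality \eqref{eqn:TPineq} part of a valid (necessary and sufficient) defining list, not an irredundant one---irredundancy is the separate Belkale--Kumar/Ressayre refinement recorded in Theorem~\ref{thm:minineq}---but this wording slip does not affect the correctness of your argument.
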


In the case where $\mathfrak{g} = \mathfrak{sl}_{n}$, a description of $\lgss$ was conjectured by Horn and proved by combining the result of Klyachko \cite{Klyachko} and of Knutson-Tao \cite{KT}. 
We note that the set of \emph{Horn inequalities} and the set of inequalities \eqref{eqn:TPineq} are the same and we encourage the readers to look at Section~4 of \cite{K14} for a detailed exposition.
We include a description of the Horn inequalities below for both completeness and convenience to use in the proof of Theorem~\ref{thm:main}. 

Let $[n]:=\{1,2,\ldots n\}$.  For any
$I=\{i_1<i_2< \cdots<i_d\}\subseteq [n]$
define the partition
\[
 \tau(I):=(i_d-d\geq \cdots\geq i_2-2 \geq i_1-1).
\]

\begin{theorem}[Horn inequalities]\label{thm:classicalHorn}
Let $\mathfrak{g} = \mathfrak{sl}_{n}$ and let $\lambda,\mu,\nu\in \Lambda_{\mathbb{Q}}^+$ be such that $\lambda+\mu-\nu$ lies in the root lattice. 
Then
$(\lambda,\mu,\nu)\in \lgss$ if and only if
for every $d<n$, and every triple of subsets $I,J,K\subseteq [n]$ of cardinality $d$ such that $c_{\tau(I),
\tau(J)}^{\tau(K)}> 0$,
\begin{equation}\label{eq:ineq}
\sum_{k\in K}\nu_k\leq
\sum_{i\in
I}\lambda_i+\sum_{j\in J}\mu_j.\end{equation}
\end{theorem}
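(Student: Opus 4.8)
The plan is to deduce the statement from the Berenstein--Sjamaar description in Proposition~\ref{prop: BKineq}, specialized to $\mathfrak g = \mathfrak{sl}_n$, where the relevant partial flag varieties are Grassmannians; an alternative is to simply invoke Klyachko's theorem together with the Knutson--Tao saturation theorem, but since Proposition~\ref{prop: BKineq} is already available I would carry out the translation explicitly. First I would record the combinatorial setup: the maximal parabolic subgroups of $SL_n$ are $P_1,\dots,P_{n-1}$ with $SL_n/P_d = \Gr(d,n)$ of dimension $d(n-d)$, the minimal-length coset representatives $W^{P_d}$ are indexed by $d$-element subsets $S\subseteq[n]$, and under this indexing the Schubert class $[X^{P_d}_S]$ has dimension $|\tau(S)|$ while $[X^{P_d}_e]$ is the point class. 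Two translations then need to be performed: (i) the cohomological hypothesis \eqref{eqn:Nov27aaa} must be rewritten as $c^{\tau(K)}_{\tau(I),\tau(J)}>0$, and (ii) the inequality \eqref{eqn:TPineq} must be rewritten as \eqref{eq:ineq}.

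For (i), I would use the standard dictionary between Schubert calculus on $\Gr(d,n)$ and Littlewood--Richardson coefficients: identifying a $d$-subset $S$ with the partition $\tau(S)\subseteq d\times(n-d)$, one has $[X^{P_d}_{S}]\cdot[X^{P_d}_{T}] = \sum_{U} c^{\tau(U)}_{\tau(S),\tau(T)}[X^{P_d}_{U}]$. Multiplying by a third Schubert class and pairing against the point class, the relation $[X^{P_d}_u]\cdot[X^{P_d}_v]\cdot[X^{P_d}_w] = k[X^{P_d}_e]$ with $k>0$ holds iff the coefficient of the Poincar\'e dual of $[X^{P_d}_w]$ in $[X^{P_d}_u]\cdot[X^{P_d}_v]$ is $k$; since the Poincar\'e dual of $[X^{P_d}_S]$ on $\Gr(d,n)$ is $[X^{P_d}_{w_0 S}]$ and $\tau(w_0 S)$ is the box-complement of $\tau(S)$, this becomes $c^{\tau(K)}_{\tau(I),\tau(J)} = k$ where $I,J$ are the subsets of $u,v$ and $K$ is $w_0$ applied to the subset of $w$. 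This automatically incorporates the dimension count $|\tau(I)|+|\tau(J)|+|\tau(K)| = d(n-d)$, which is in any case forced once the three subsets have the same size $d$ and the coefficient is nonzero. Here I would be mindful of the Schubert-vs-opposite-Schubert bookkeeping flagged in Remark~\ref{rmk:convention}, but Proposition~\ref{prop: BKineq} is stated in the dimension-indexed Schubert basis, which is exactly the one used above, so no extra duality intervenes at this step.

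For (ii), I would compute the three terms of \eqref{eqn:TPineq} in coordinates. With $\alpha_i(x_j)=\delta_{i,j}$ one has $x_{P_d}=x_d$, which in the $e$-coordinates is the projection of $(1^d,0^{n-d})$; hence $\lambda(w x_d) = \sum_{i\in I_w}\lambda_i - \tfrac{d}{n}|\lambda|$ where $I_w := w\{1,\dots,d\}$, and likewise for $\mu$. Writing $\nu^* = -w_0\nu$, so $\nu^*$ is $\nu$ reversed and negated, the same computation gives $\nu^*(w x_d) = -\sum_{k\in K}\nu_k + \tfrac{d}{n}|\nu|$ with $K := w_0 I_w$, which matches the $K$ produced in step (i). Adding the three terms, the constants combine to $-\tfrac{d}{n}(|\lambda|+|\mu|-|\nu|)$, which vanishes precisely because $\lambda+\mu-\nu$ lies in the root lattice (equivalently $|\lambda|+|\mu|=|\nu|$ in this normalization), so \eqref{eqn:TPineq} becomes $\sum_{k\in K}\nu_k \le \sum_{i\in I}\lambda_i + \sum_{j\in J}\mu_j$. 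As $(u,v,w)$ ranges over $(W^{P_d})^3$ satisfying \eqref{eqn:Nov27aaa} and $d$ ranges over $1,\dots,n-1$, the triple $(I,J,K)$ ranges exactly over triples of $d$-subsets with $c^{\tau(K)}_{\tau(I),\tau(J)}>0$; since $\lgss$ is by construction saturated, Proposition~\ref{prop: BKineq} gives that these inequalities cut out $\lgss$ exactly, which is the claim.

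The step I expect to be the main obstacle is the bookkeeping shared by (i) and (ii): keeping straight which $d$-subset is attached to each of $u,v,w$ versus to each of $\lambda,\mu,\nu^*$, lining up the complementation in Poincar\'e duality on $\Gr(d,n)$ with the passage between $\nu$ and $\nu^*$, and verifying that all the $\tfrac{d}{n}$-type constants cancel under the root-lattice hypothesis rather than leaving a residual shift. None of these steps is deep, but a single off-by-complement or off-by-$w_0$ slip would flip an inequality, so the work is really in matching conventions with \cite{BS00}, \cite{Klyachko}, and \cite{KT}.
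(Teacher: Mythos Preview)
The paper does not give its own proof of Theorem~\ref{thm:classicalHorn}: it is stated as a known result (Horn's conjecture, established by combining Klyachko \cite{Klyachko} with Knutson--Tao saturation \cite{KT}), and the reader is referred to Section~4 of \cite{K14} for the detailed identification of the Horn inequalities with the Berenstein--Sjamaar inequalities~\eqref{eqn:TPineq}. Your proposal to derive the statement by specializing Proposition~\ref{prop: BKineq} to $\mathfrak{sl}_n$, where $G/P_d=\Gr(d,n)$ and the Schubert structure constants are Littlewood--Richardson coefficients, is exactly this standard translation and is what the paper alludes to in the sentence preceding the theorem; the bookkeeping you flag (matching $d$-subsets to minimal coset representatives, Poincar\'e duality on the Grassmannian versus $\nu\mapsto\nu^*=-w_0\nu$, and cancellation of the $\tfrac{d}{n}(|\lambda|+|\mu|-|\nu|)$ term under the root-lattice hypothesis) is indeed the entire content and goes through as you describe.
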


Belkale-Kumar \cite{BK06} described a subset of inequalities \eqref{eqn:TPineq} that characterize $\lgss$ using a deformed cohomology product $\odot$ on $H^*(G/P)$. It is proved by Ressayre \cite{R10} proved that the subset is irredundant, namely any proper subset defines a different cone. We refer the readers to Section~6 of \cite{BK06} for the precise definition of $\odot$. 

\begin{theorem}\label{thm:minineq}\cite{BK06,R10}
    Proposition~\ref{prop: BKineq} still holds if we replace \eqref{eqn:Nov27aaa} with
    \begin{equation}\label{eqn:Dec3aaa}
        \left[X_u^P\right] \odot\left[X_v^P\right] \odot\left[X_w^P\right]=\left[X_e^P\right] \in (H^*(G / P, \mathbb{Z}),\odot).
    \end{equation}
    Moreover, replacing \eqref{eqn:Nov27aaa} with \eqref{eqn:Dec3aaa} yields a minimal set of inequalities describing $\lgss$. 
\end{theorem}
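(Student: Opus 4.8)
The plan is to obtain both assertions together from the geometric invariant theory (GIT) realization of $\lgss$ and from Ressayre's analysis of the faces of GIT cones; the first assertion alone can instead be proved by Belkale--Kumar's original redundancy argument, but the GIT route delivers minimality for free and is what I would write up. The starting point is Proposition~\ref{prop: BKineq}, which already exhibits $\lgss$ as a rational polyhedral cone cut out by the inequalities \eqref{eqn:TPineq} over the larger index set \eqref{eqn:Nov27aaa}. Since a nonzero $\odot$-structure constant equals the ordinary one, the constraint \eqref{eqn:Dec3aaa} forces the ordinary triple product to be $1\cdot[X_e^P]$, so the new index set sits inside \eqref{eqn:Nov27aaa}; everything then reduces to identifying which of the inequalities \eqref{eqn:TPineq} are \emph{facet-defining} for $\lgss$ and checking that this collection is precisely the one indexed by \eqref{eqn:Dec3aaa} --- a full-dimensional closed convex cone being the intersection of the closed halfspaces supported on its facets.

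First I would set up the GIT picture: clearing denominators, $(\lambda,\mu,\nu)\in\lgss$ if and only if the diagonal action of $G$ on $X=(G/B)^3$ admits a point semistable for the linearization $\mathcal L$ attached to $(\lambda,\mu,\nu^*)$, so that $\lgss$ --- understood as a subcone of $(\Lambda_{\mathbb Q}^+)^3$, and up to the evident $\nu\leftrightarrow\nu^*$ and rescaling identifications --- is the cone of $G$-effective linearizations of this action. Its facets are then controlled by the Hilbert--Mumford numerical function $\mu^{\mathcal L}(x,\tau)$ together with Kempf's theory of optimal destabilizing one-parameter subgroups and the Luna slice theorem: this is Ressayre's structure theorem, to the effect that every facet is the hyperplane $\{\mu^{\mathcal L_{\lambda,\mu,\nu^*}}(C,\tau)=0\}$ associated to a \emph{well-covering pair} $(\tau,C)$, where $\tau$ is a dominant indivisible one-parameter subgroup, $C$ an irreducible component of $X^\tau$, and ``well-covering'' means the Bialynicki--Birula contraction $G\times_{P(\tau)}C^+\to X$ is birational onto its image and that image meets a generic $G$-orbit in the expected dimension. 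For $X=(G/B)^3$ an indivisible dominant $\tau$ singles out a maximal parabolic $P=P(\tau)$, a component $C$ of $X^\tau$ is indexed by a triple $(u,v,w)\in(W^P)^3$, and a direct evaluation of $\mu^{\mathcal L}(C,\tau)$ rewrites the defining hyperplane as the supporting hyperplane of \eqref{eqn:TPineq}.

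The remaining step is to translate ``well-covering'' into the deformed product. Near the base point of $G/P$ the cell $C^+$ is a product of three Schubert cells whose tangent spaces sit inside $T_e(G/P)=\mathfrak g/\mathfrak p$; let $N_u,N_v,N_w$ be the complementary ``normal'' spaces, which are modules over the Levi $L=L_P$. Birationality of the contraction forces $\dim N_u+\dim N_v+\dim N_w=\dim(G/P)$, equivalently that the product of the three Schubert classes is a positive multiple of the point class; and the requirement that the image meet a generic $G$-orbit in the expected dimension becomes, after translating the three cells by generic elements of $L$, the statement that they meet transversally at the base point, i.e.\ that the induced map $\det N_u\otimes\det N_v\otimes\det N_w\to\det(\mathfrak g/\mathfrak p)$ of one-dimensional $L$-modules is \emph{nonzero}. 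By Belkale--Kumar's definition of $\odot$ this is exactly the condition $[X_u^P]\odot[X_v^P]\odot[X_w^P]=[X_e^P]$, i.e.\ \eqref{eqn:Dec3aaa}. Hence the facets of $\lgss$ correspond bijectively to the data $(P;u,v,w)$ satisfying \eqref{eqn:Dec3aaa}, which proves simultaneously that these inequalities already cut out $\lgss$ (the first assertion, using Proposition~\ref{prop: BKineq}) and that no proper subcollection suffices (minimality) --- once one observes that distinct data give distinct facets, since the optimal destabilizing pair $(\tau,C)$, hence $(P;u,v,w)$, is recovered from the facet.

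I expect the genuine work to be concentrated in the second step --- proving that the faces of a GIT cone are governed exactly by well-covering pairs, neither more nor fewer --- which is Ressayre's main technical input and rests on Kempf's optimal one-parameter subgroups, the Luna slice theorem, and careful dimension bookkeeping showing that every well-covering pair yields a genuine face and every facet arises this way. A secondary, more routine, obstacle is the precise matching in the last step between ``meets a generic orbit in the expected dimension'' and the \emph{nonvanishing} of the determinant map --- the Levi-movability condition of Belkale--Kumar --- as opposed to mere equality of $L$-characters; one must also keep careful track of the Schubert versus opposite-Schubert conventions flagged in Remark~\ref{rmk:convention} when passing back and forth between the numerical data and cohomology.
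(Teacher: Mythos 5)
This statement is quoted in the paper as a known result, with no proof supplied: the first assertion is Belkale--Kumar's theorem and the irredundancy is Ressayre's, and the paper simply cites \cite{BK06,R10}. Your outline is a faithful summary of the route actually taken in that literature (specifically Ressayre's GIT proof, which recovers the Belkale--Kumar sufficiency statement along the way): realize $\lgss$ as the cone of effective linearizations for the diagonal $G$-action on $(G/B)^3$, identify its facets with well-covering pairs $(\tau,C)$ via Kempf/Hilbert--Mumford and Luna's slice theorem, and translate well-covering for $C$ indexed by $(u,v,w)\in (W^P)^3$ into Levi-movability, i.e.\ into \eqref{eqn:Dec3aaa}. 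So the approach is the right one and matches the source the paper relies on.

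As a standalone proof, however, what you have written is a roadmap rather than an argument: the two steps you defer --- (i) every facet of the GIT cone meeting the regular locus arises from a well-covering pair and conversely every well-covering pair produces a facet, and (ii) well-covering is equivalent to the nonvanishing of the determinant map on the $L$-modules $N_u,N_v,N_w$, i.e.\ to \eqref{eqn:Dec3aaa} --- are precisely the content of the theorem, so invoking them is essentially re-citing \cite{R10}. Two smaller points to tighten if you were to write this out: the cone sits inside $(\Lambda_{\mathbb Q}^+)^3$, so the facets lying in the walls of the dominant chamber must be separated from the facets giving \eqref{eqn:TPineq} before applying ``a full-dimensional cone is cut out by its facet inequalities''; and birationality of $G\times_{P(\tau)}C^+\to X$ does not follow merely from the dimension count $\dim N_u+\dim N_v+\dim N_w=\dim G/P$ --- the coefficient-one statement (as opposed to $k>0$) is part of the covering/degree analysis, and it matters here because \eqref{eqn:Dec3aaa} demands coefficient exactly $1$.
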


In \cite{NL2}, Orelowitz, Yong, and the first author introduced \emph{extended Horn inequalities} in their study of tensor product multiplicities in the stable range for $\mathfrak{g} = \mathfrak{so}_{2n+1},\mathfrak{sp}_{2n}$ and $\mathfrak{so}_{2n}$. They contain the Horn inequalities \eqref{eq:ineq} and the Belkale-Kumar inequalities when $\mathfrak{g} = \mathfrak{sp}_{2n}$ (as in Theorem~\ref{thm:minineq}) as special cases (see Section~3 of \cite{NL3}). Together with Ressayre in \cite{NL3}, they proved the following:

\begin{theorem}[Extended Horn inequalities]
  \label{thm:GOYconj}
Let $\mathfrak{g} = \mathfrak{sp}_{2n}$ and  $\lambda,\mu,\nu\in\Lambda_{\mathbb Q}^+$. Then $(\lambda,\mu,\nu)\in
\lgss$ if and only if
\begin{equation}
\label{Ineq:EH}
0\leq \sum_{a\in A}\lambda_a-\sum_{a'\in A'}\lambda_{a'}+\sum_{b\in B}\mu_b-\sum_{b'\in B'}\mu_{b'}+\sum_{c\in C}\nu_c-\sum_{c'\in C'}\nu_{c'}
\end{equation}
for any subsets $A,A',B,B',C,C'\subset [n]$ such that
\begin{enumerate}
\item $A \cap A'= B \cap B' = C \cap C' = \emptyset$;
\item \label{cond:size}$|A|+|A'|=|B|+|B'|=|C|+|C'|=|A'|+|B'|+|C'|=:r$;
\item \label{cond:ind6} the Littlewood-Richardson coefficients
$c_{\alpha_1,\alpha_2}^{\tau(A)},c_{\alpha_2,\alpha_3}^{\tau(C')},c_{\alpha_3,\alpha_4}^{\tau(B)},c_{\alpha_4,\alpha_5}^{\tau(A')}, c_{\alpha_5,\alpha_6}^{\tau(C)},c_{\alpha_6,\alpha_1}^{\tau(B')}>0$ for some partitions $\alpha_1,\dots,\alpha_6$.
\end{enumerate}
\end{theorem}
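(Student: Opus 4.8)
The plan is to prove the two inclusions of cones separately. For sufficiency --- every $(\lambda,\mu,\nu)$ satisfying all of \eqref{Ineq:EH} lies in $\lgss$ --- it is enough, by Theorem~\ref{thm:minineq}, to show that the list \eqref{Ineq:EH} contains, for $\mathfrak{g}=\mathfrak{sp}_{2n}$, every Belkale--Kumar inequality; since those already cut out $\lgss$, that direction is then finished. For necessity --- every inequality \eqref{Ineq:EH} is valid on $\lgss$ --- it is enough, by Proposition~\ref{prop: BKineq}, to realize each extended Horn inequality as a consequence of (in fact, typically as one of) the Berenstein--Sjamaar inequalities \eqref{eqn:TPineq}. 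I would carry out necessity first and sufficiency second, and both hinge on translating (deformed) Schubert calculus on the isotropic Grassmannians $\Sp_{2n}/P$ into ordinary Littlewood--Richardson positivity.

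For necessity I would use the Newell--Littlewood description of symplectic tensor multiplicities: in the relevant (saturated) range the multiplicity is computed by $N_{\lambda,\mu,\nu}=\sum_{\alpha,\beta,\gamma}c^{\lambda}_{\alpha,\beta}\,c^{\mu}_{\beta,\gamma}\,c^{\nu}_{\gamma,\alpha}$, the sum being over triples of partitions and the $c$'s ordinary Littlewood--Richardson coefficients. Invoking the Knutson--Tao saturation theorem \cite{KT} for $\mathfrak{sl}$ --- so that each factor is controlled by its own Horn cone --- a point of $\lgss$ comes with a witnessing triple $(\alpha,\beta,\gamma)$ lying in the three classical Horn cones of $c^{\lambda}_{\alpha,\beta}$, $c^{\mu}_{\beta,\gamma}$, $c^{\nu}_{\gamma,\alpha}$. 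Given subsets $A,A',B,B',C,C'$ as in the hypotheses, I would feed index sets assembled from them --- some used directly, some in complemented form --- into these three Horn systems (Theorem~\ref{thm:classicalHorn}) and add the resulting three inequalities; the cardinality bookkeeping in \ref{cond:size}, in particular $|A'|+|B'|+|C'|=r$, forces the contributions of $\alpha,\beta,\gamma$ to appear once with each sign and cancel, leaving exactly \eqref{Ineq:EH}. The six Littlewood--Richardson positivities in \ref{cond:ind6} are precisely the ``admissibility'' hypotheses of the form $c^{\tau(K)}_{\tau(I),\tau(J)}>0$ that license invoking those three Horn inequalities; the fact that there are six of them --- a hexagon, not a triangle --- reflects that each of the three glued Horn systems already carries its own layer of Littlewood--Richardson recursion, and pinning down exactly which subsets to use, and in which (direct or complemented, dual or reversed) form, is the combinatorial core of this step.

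For sufficiency I must place the Belkale--Kumar inequalities among the \eqref{Ineq:EH}. Since $P\subset\Sp_{2n}$ is maximal, $\Sp_{2n}/P$ is an isotropic Grassmannian $IG(k,2n)$; the elements of $W^P$ are recorded by ``signed'' subsets of $[n]$, and a value $\lambda(u x_P)$ is a signed partial sum $\sum_{a\in A}\lambda_a-\sum_{a'\in A'}\lambda_{a'}$ --- this is why the data of an extended Horn inequality split into $A$ versus $A'$, and why the size conditions \ref{cond:size} appear. It remains to match the condition $[X_u^P]\odot[X_v^P]\odot[X_w^P]=[X_e^P]$ with the cyclic positivities \ref{cond:ind6}: here one uses that (ordinary, and deformed) nonvanishing of triple Schubert products on $IG(k,2n)$ is governed by products of ordinary Littlewood--Richardson coefficients, via Buch--Kresch--Tamvakis--type Pieri and Littlewood--Richardson rules for isotropic Grassmannians together with a comparison between Schubert calculus on $\Sp_{2n}/P$ and on products of ordinary Grassmannians, now carried out at the level of the deformed product $\odot$.

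The main obstacle is exactly this translation, performed uniformly over all maximal parabolics $P$ and all ($\odot$-)nonvanishing triples: matching Belkale--Kumar nonvanishing on isotropic Grassmannians with the hexagon of ordinary Littlewood--Richardson positivities, and --- on the necessity side --- checking that the hexagonal conditions never yield an inequality strictly stronger than the Berenstein--Sjamaar ones, so that the extended Horn list is valid but (as it must be, given the minimality in Theorem~\ref{thm:minineq}) merely redundant rather than too large. The delicate bookkeeping throughout is the factor-of-two doubling inherent in $\Sp_{2n}\subset\SL_{2n}$, the duals $\nu$ versus $\nu^*$ together with the partition-reversal conventions inside $\tau(\cdot)$, and the sign splitting $A$ versus $A'$ --- all of which must be coordinated so that the three Horn systems, equivalently the three isotropic Schubert products, glue around the hexagon with consistent orientations.
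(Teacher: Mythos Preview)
The paper does not prove Theorem~\ref{thm:GOYconj} at all: it is quoted as a result from \cite{NL2} and \cite{NL3} (Gao--Orelowitz--Yong, and then with Ressayre), so there is no ``paper's own proof'' to compare against. Your task of comparison is therefore vacuous in the strict sense.

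That said, your outline is broadly consonant with what the paper reports about those references. The paper states explicitly that the extended Horn inequalities ``contain the Horn inequalities \eqref{eq:ineq} and the Belkale--Kumar inequalities when $\mathfrak{g}=\mathfrak{sp}_{2n}$ \dots\ as special cases (see Section~3 of \cite{NL3})'', which is exactly your sufficiency direction. Your necessity direction via Newell--Littlewood numbers and gluing three classical Horn systems around a hexagon is the right picture and is the mechanism behind \cite{NL2}. One caution: the Newell--Littlewood identity $N_{\lambda,\mu,\nu}=\sum c^{\lambda}_{\alpha,\beta}c^{\mu}_{\beta,\gamma}c^{\nu}_{\gamma,\alpha}$ only computes the symplectic multiplicity in the \emph{stable} range, not for arbitrary $(\lambda,\mu,\nu)$; passing from ``valid on the Newell--Littlewood cone'' to ``valid on $\lgss$ for $\mathfrak{sp}_{2n}$'' requires an additional argument identifying the two saturated cones (this is the content supplied in \cite{NL3}), and your sketch glosses over that step with the phrase ``in the relevant (saturated) range''. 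If you were to write this out in full, that identification is where the real work beyond bookkeeping lies.
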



\section{Proof of Theorem~\ref{thm:main}}
Fix $\lambda = (\lambda_1,\ldots,\lambda_n)\in \Lambda^+$. We begin with a useful lemma that holds in type $A$ through $D$.


\begin{lemma}\label{lemma:ap}
    If $\lambda_m+\lambda_{m+2} \neq 2\lambda_{m+1}$ for some $m\in [n-2]$ then $ \cT_{\lambda}\neq \cD_{2\lambda}$.
\end{lemma}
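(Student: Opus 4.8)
The plan is to show that if $\lambda_m + \lambda_{m+2} \neq 2\lambda_{m+1}$ for some $m \in [n-2]$, then some vertex of $\cD_{2\lambda}$ fails to lie in $\cT_\lambda$, forcing strict containment $\cT_\lambda \subsetneq \cD_{2\lambda}$. By Theorem~\ref{thm:IPvertex} the vertices of $\cD_{2\lambda}$ are the points $v_I(2\lambda) = \frac{1}{|W_I|}\sum_{w \in W_I} w\cdot 2\lambda$, so it suffices to exhibit a single $I \subseteq [n]$ for which $v_I(2\lambda) \notin \cT_\lambda$. The natural candidate is to take $I$ to be a small parabolic living entirely in the type-$A$ part of the diagram — concretely $I = \{m, m+1\}$, giving $W_I \cong S_3$ acting on coordinates $m, m+1, m+2$ — and to compute $v_I(2\lambda)$ explicitly: it is $2\lambda$ with the three entries $\lambda_m, \lambda_{m+1}, \lambda_{m+2}$ each replaced by their average $\frac{2}{3}(\lambda_m + \lambda_{m+1} + \lambda_{m+2})$. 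The hypothesis $\lambda_m + \lambda_{m+2} \neq 2\lambda_{m+1}$ is exactly what makes $v_I(2\lambda)$ differ from $2\lambda$ in a genuinely "non-collinear" way, i.e. not obtainable by the linear (collinear) moves that $\cT_\lambda$ does allow.

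The key step is then to certify that this vertex violates one of the inequalities defining $\lgss$ (equivalently $\cT_\lambda$) via Proposition~\ref{prop: BKineq}, or in type $A$ via the Horn inequalities of Theorem~\ref{thm:classicalHorn}. First I would reduce to $\mathfrak{sl}_n$: since the offending configuration sits in three consecutive coordinates $m, m+1, m+2$ joined by the simple roots $\alpha_m = e_m - e_{m+1}$ and $\alpha_{m+1} = e_{m+1} - e_{m+2}$, which form an $A_2$ sub-root-system in every classical type, Lemma~\ref{lemma:multg0} (applied with $\Pi' = \{\alpha_m, \alpha_{m+1}\}$ and the orthogonal projection $p$) lets me replace the question for general $\mathfrak{g}$ by the analogous question for $\mathfrak{sl}_3$ acting on the relevant three coordinates, where $(\lambda, \lambda, \mu) \in \mathfrak{sl}_3\text{-sat}$ is governed by a short explicit list of Horn inequalities. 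So I would need to check: with $a = \lambda_m, b = \lambda_{m+1}, c = \lambda_{m+2}$ (so $a \geq b \geq c$, and WLOG $c = 0$ after subtracting), is $(2a, 2b, 0)$-type weight $\mu = (\tfrac{2}{3}(a{+}b{+}c), \tfrac{2}{3}(a{+}b{+}c), \tfrac{2}{3}(a{+}b{+}c))$ in the saturated tensor square of $(2a, 2b, 2c)$? The relevant Horn inequality for $\mathfrak{sl}_3$ — something like $\mu_1 \leq \lambda_1 + \lambda_2$ with the appropriate choice of $I, J, K$ with positive $c^{\tau(K)}_{\tau(I),\tau(J)}$ — will read $\tfrac{2}{3}(a+b+c) \leq 2\lambda_i + 2\lambda_j$ for suitable indices, and I expect precisely the inequality that rearranges to $\lambda_m + \lambda_{m+2} \geq 2\lambda_{m+1}$ or its reverse to be the binding one; since equality fails, one direction is violated and we are done.

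The main obstacle I anticipate is bookkeeping rather than conceptual: I must make sure the chosen vertex $v_I(2\lambda)$ is actually a dominant weight (so that it genuinely lies in $\cD_{2\lambda}$ and the comparison with $\cT_\lambda$ is meaningful) — averaging three consecutive entries of a weakly decreasing sequence keeps dominance within those coordinates and does not disturb the inequalities with neighbors, which should work but needs a line of checking — and I must handle the two cases $\lambda_m + \lambda_{m+2} > 2\lambda_{m+1}$ and $\lambda_m + \lambda_{m+2} < 2\lambda_{m+1}$; only one of these can occur given dominance-type constraints, or else both lead to a violated inequality by a symmetric argument. A secondary subtlety is the passage through Lemma~\ref{lemma:multg0}: one must verify the hypothesis $\lambda + \nu - \mu \in \Phi'$ (here $\lambda + \lambda - \mu$ lies in the span of $\alpha_m, \alpha_{m+1}$) for the candidate $\mu = v_I(2\lambda)$, which holds by construction since only coordinates $m, m+1, m+2$ are altered and the total is preserved. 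Once these are in place, the strictness $\cT_\lambda \neq \cD_{2\lambda}$ follows immediately, proving the lemma.
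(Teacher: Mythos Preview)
Your proposal is correct and follows essentially the same route as the paper: pick the vertex $\mu = v_{\{m,m+1\}}(2\lambda)$, and show it violates a Horn inequality for the triple $(\lambda,\lambda,\mu)$. The paper carries this out by writing down two explicit Horn inequalities in $\mathfrak{sl}_n$ (with $\tau(K)=(1,1)$ and $\tau(K)=(2)$ respectively, both against $\tau(I)=\tau(J)=(1)$), which simplify to $\lambda_m+\lambda_{m+2}\ge 2\lambda_{m+1}$ and $\lambda_m+\lambda_{m+2}\le 2\lambda_{m+1}$; exactly one fails under the hypothesis, settling both cases you were worried about. For types $B,C,D$ the paper reduces to $\mathfrak{sl}_n$ via \eqref{eqn:redtosln} (using $|\mu|=2|\lambda|$ and a clearing-denominators scaling), whereas you reduce to $\mathfrak{sl}_3$ via the $A_2$ subsystem $\{\alpha_m,\alpha_{m+1}\}$; both invocations of Lemma~\ref{lemma:multg0} are valid and the difference is cosmetic. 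Your worry about dominance of $v_I(2\lambda)$ is unnecessary, since Theorem~\ref{thm:IPvertex} already places these points in $\Lambda_{\mathbb Q}^+$.
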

\begin{proof}
Let $\mu = v_{\{m,m+1\}}(\lambda)$ as in Theorem~\ref{thm:IPvertex}. More explicitly,
\begin{equation}\label{eqn:defmu}
    \mu_j = \begin{cases}
\frac{2(\lambda_m+\lambda_{m+1}+\lambda_{m+2})}{3} & \text{ if }m\leq j\leq m+2\\
2\lambda_j & \text{ otherwise }
\end{cases}.
\end{equation}
Notice that for any $\mathfrak{g}$ of classical Lie type, $\mu$ is a vertex of $\cD_{2\lambda}$ and $|\mu| = 2|\lambda|$. 

We first study the case where $\mathfrak{g} = \mathfrak{sl}_n$.
Consider the inequalities 
\begin{equation}\label{eqn:Horn1}
    \sum_{k\in [m+2]\setminus \{m\}} \mu_k \leq \sum_{i\in [m+2]\setminus \{m+1\}} \lambda_i +\sum_{j\in [m+2]\setminus \{m+1\}} \lambda_j,
\end{equation}
and 
\begin{equation}\label{eqn:Horn2}
    \sum_{k\in [m+2]\setminus \{m,m+1\}} \mu_k \leq \sum_{i\in [m+2]\setminus \{m,m+2\}} \lambda_i +\sum_{j\in [m+2]\setminus \{m,m+2\}} \lambda_j.
\end{equation}
Observe that both \eqref{eqn:Horn1} and \eqref{eqn:Horn2} are cases of Horn inequalities \eqref{eq:ineq} for the triple $(\lambda,\lambda,\mu)$. Indeed, in the case of \eqref{eqn:Horn1}, we have $\tau(K) = (1,1), \tau(I) = \tau(J) = (1)$, and in \eqref{eqn:Horn2}, we have $\tau(K) = (2), \tau(I) = \tau(J) = (1)$. In both cases, $c_{\tau(I),\tau(J)}^{\tau(K)} = 1>0$. 
Since $\mu_j = 2\lambda_j$ for all $j\notin [m,m+2]$, we can rewrite \eqref{eqn:Horn1} and \eqref{eqn:Horn2} as 
\begin{equation}\label{eqn:Horn3}
    \mu_{m+1}+\mu_{m+2}\leq 2(\lambda_{m}+\lambda_{m+2})\tag{7'}
\end{equation}
and 
\begin{equation}\label{eqn:Horn4}
    \mu_{m+2}\leq 2\lambda_{m+1}.\tag{8'}
\end{equation}
By \eqref{eqn:defmu}, Equation~\eqref{eqn:Horn3} holds if and only if $\lambda_m+\lambda_{m+2}\geq 2\lambda_{m+1}$, and Equation~\eqref{eqn:Horn4} holds if and only if $\lambda_m+\lambda_{m+2}\leq 2\lambda_{m+1}$. Since $\lambda_m+\lambda_{m+2}\neq 2\lambda_{m+1}$,
\eqref{eqn:Horn3} and \eqref{eqn:Horn4} cannot hold simultaneously, and thus $\mu\notin \cT_{\lambda}$. Since $\mu$ is a vertex of $\cD_{2\lambda}$, we have $\cT_{\lambda} \neq \cD_{2\lambda}$ when $\mathfrak{g} = \mathfrak{sl}_n$.

For $\mathfrak{g} = \mathfrak{so}_{2n+1},\mathfrak{sp}_{2n}$ or $\mathfrak{so}_{2n}$, we use Lemma~\ref{lemma:multg0}. Let $\lambda\in \Lambda^+$ and let $\mu$ be constructed as in \eqref{eqn:defmu}. Notice that
$\mu$ is always a vertex of the dominant weight polytope $\cD_{2\lambda}$, and 
\[\sum_{i\in [n]}2\lambda_i = \sum_{i\in [n]}\mu_i.\]
Since there is some $d\in \mathbb{Z}_{> 0}$ such that $d\lambda_i$ and $d\mu_i$ are integers for all $i\in [n]$, $d\lambda+d\lambda-d\mu$ lies in the root lattice of $\mathfrak{sl}_n$. By \eqref{eqn:redtosln},
$$c_{dk\lambda,dk\lambda}^{dk\mu}(\mathfrak{g}) =c_{dk\lambda,dk\lambda}^{dk\mu}(\mathfrak{sl}_n) = 0,$$
for all $k\in \mathbb{Z}_{> 0}$. As a result, $dk\mu\notin \cT_{dk\lambda}$ for any $\mathfrak{g}$ of classical lie type. Since the inequalities defining $\cT_{\lambda}$ (See equation~\eqref{eqn:TPineq}) are linear, we can conclude that $\mu\notin \cT_{\lambda}$.
\end{proof}

\noindent \emph{Proof of Theorem~\ref{thm:main}:} We first prove the $(\implies)$ direction. By Lemma~\ref{lemma:ap}, we are left to show that for $\mathfrak{g} = \mathfrak{so}_{2n}$, $\cD_{2\lambda} \neq \cT_{\lambda}$ for all $\lambda = N\rho+k\omega_n$ with $k\neq 0$. 
\begin{claim}\label{claim:0vertex}
    $0$ is a vertex of $\cD_{2\lambda}$.
\end{claim}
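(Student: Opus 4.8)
The plan is to show that $0$ is a vertex of $\cD_{2\lambda}$ when $\lambda = N\rho + k\omega_n$ with $k \neq 0$ in type $D_n$, by invoking the description of vertices from Theorem~\ref{thm:IPvertex}. Recall that every vertex of $\cD_{2\lambda}$ has the form $v_I(2\lambda) = \frac{1}{|W_I|}\sum_{w\in W_I} w\cdot 2\lambda$ for some $I \subseteq [n]$. Since $0$ is the unique point fixed by all of $W$, a natural candidate is $I = [n]$, giving $v_{[n]}(2\lambda) = \frac{2}{|W|}\sum_{w\in W} w\cdot \lambda$. I would first check that this average is actually $0$: indeed $\sum_{w\in W} w\cdot \lambda$ is $W$-invariant, hence a multiple of $0$ (the only $W$-fixed point in $\mathfrak{h}^*_{\mathbb Q}$, since $\mathfrak{h}^*$ has no nonzero $W$-invariants for a semisimple Lie algebra), so $v_{[n]}(2\lambda) = 0$. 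Therefore $0$ is one of the vertices listed in Theorem~\ref{thm:IPvertex}, and hence a genuine vertex of $\cD_{2\lambda}$.

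An alternative, more hands-on route — useful if one wants to avoid appealing to the exact indexing of Theorem~\ref{thm:IPvertex} — is to exhibit a linear functional on $\mathfrak{h}^*_{\mathbb Q}$ that is uniquely minimized over $\cD_{2\lambda}$ at $0$. Since $\cD_{2\lambda}$ is the convex hull of the Weyl orbit $W\cdot 2\lambda$ intersected with the dominant cone, and all vertices of $\cD_{2\lambda}$ lie in the convex hull of $W\cdot 2\lambda$, it suffices to find $\xi \in \mathfrak{h}^*_{\mathbb Q}$ with $\langle \xi, w\cdot 2\lambda\rangle > 0$ for all $w \neq \mathrm{id}$ in an appropriate sense — more precisely, one picks $\xi$ regular dominant and uses that $\langle \xi, w\cdot 2\lambda \rangle$ is strictly positive for all relevant $w$ while $0$ lies in $\cD_{2\lambda}$ because $2\lambda$ and $w_0(2\lambda) = -2\lambda^*$... but here one must be careful since $\lambda$ is not self-dual in type $D_n$ when $k\neq 0$. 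So in fact the cleanest argument is: $0 \in \cD_{2\lambda}$ because $0 = \frac{1}{|W|}\sum_{w} w\cdot 2\lambda$ is a convex combination of points of the Weyl orbit (each of which lies in $\cD_{2\lambda}$ after being made dominant — or rather, $\cD_{2\lambda}$ being the dominant part, one uses that $\cD_{2\lambda}$ contains the barycenter by its defining inequalities $2\lambda \geq \mu$ with $\mu = 0$ trivially satisfied), and $0$ is extremal because the supporting hyperplanes $\{\mu_i \geq \mu_{i+1}\}$ for $i\in[n-1]$ together with $\{\mu_{n-1}+\mu_n \geq 0\}$ — i.e. the walls of the dominant chamber — all pass through $0$ and their normals span $\mathfrak{h}^*_{\mathbb Q}$.

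Concretely I would argue: $0 \in \cD_{2\lambda}$ since $2\lambda \geq 0$ always (as $2\lambda = \sum 2\lambda_i e_i$ with all coordinates of $2\lambda$ a nonnegative rational combination of simple roots — true because $\lambda$ is dominant, so $2\lambda$ is dominant, and every dominant weight is $\geq 0$). Then $0$ is a vertex: the facets of the dominant cone through $0$ are cut out by the $n$ linearly independent functionals $\langle \alpha_i^\vee, -\rangle$, $i\in[n]$ (equivalently $\mu_1 \geq \mu_2 \geq \cdots \geq \mu_{n-1} \geq |\mu_n|$ in type $D_n$ coordinates), and at $\mu = 0$ all $n$ of these are tight, so $0$ is a vertex of $\cD_{2\lambda} = \{\mu \in \Lambda^+_{\mathbb Q} : 2\lambda \geq \mu\}$ provided $0$ is actually in this polytope, which we just checked, and provided $\cD_{2\lambda}$ is full-dimensional near no obstruction arises — one notes $\cD_{2\lambda}$ is $n$-dimensional since $2\lambda$ is regular-ish; even if not, $0$ being cut out by $n$ independent supporting hyperplanes of the ambient cone forces it to be a vertex.

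\textbf{Expected main obstacle.} The only subtlety is bookkeeping about which description of $\cD_{2\lambda}$ to use and making sure $0$ genuinely satisfies the defining inequalities rather than lying on the boundary in a degenerate way; once one commits to the vertex description $v_I(2\lambda)$ with $I = [n]$ from Theorem~\ref{thm:IPvertex} and the fact that $\mathfrak{h}^*$ carries no nonzero $W$-invariants, the proof of the claim is essentially immediate. I expect this claim to be one short paragraph in the final paper.
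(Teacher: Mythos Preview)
Your first paragraph is exactly the paper's argument: invoke Theorem~\ref{thm:IPvertex} with $I=[n]$ and note that the $W$-average of $2\lambda$ is $0$ since $\mathfrak{h}^*$ has no nonzero $W$-invariants. The paper's proof is literally one line to this effect, so your alternative hands-on routes are unnecessary here.
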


\noindent \emph{Proof of Claim~\ref{claim:0vertex}: }This follows directly from Theorem~\ref{thm:IPvertex} by setting $I = [n]$. \qed

We wish to show that $0\notin \cT_{\lambda}$. By Proposition~\ref{prop: BKineq}, it is enough to show the existence of $u,w\in W^P$ such that 
\[\lambda(ux_P)>0 \text{ and }[X_u^P]\cdot [X_u^P] \cdot [X_w^P]  = k[X_e^P]\in H^*(SO(2n)/B) \text{ for some }k>0.\]
Consider first the case $n =3$. Let $P$ be the maximal parabolic subgroup associated to the simple root $\alpha_1 = e_1-e_2$ as in Figure~\ref{fig:D3}. Define $u_1 = s_3s_1, u_2 = s_2s_1\in W^P$ and set $w_0^P$ be the minimal $W_P$-coset representative of the longest permutation $w_0$. In one line notation, $u_1 = -3\;1\,{-}2$ and $u_2 = 312$. Let $X_1 = SO(6)/B$ and $X_2 = SL(4)/B$.
\begin{claim}\label{claim:Nov26aaa}
    $c_{u_i,u_i}^{w_0^P}(X_1) = 1$ and thus $[X_{u_i}^P]\cdot [X_{u_i}^P]\cdot [X_{w_0^P}^P] = [X_e^P]\in H^{*}(SO(6)/P)$ for $i\in \{1,2\}$.
\end{claim}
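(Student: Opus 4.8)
The plan is to verify the claimed equality of Schubert structure constants by transporting the computation from type $D_3$ to type $A_3$ via the Dynkin diagram isomorphism $\iota\colon D_3\leftrightarrow A_3$ of Figure~\ref{fig:diagram}, where it becomes an ordinary Littlewood–Richardson (or Schubert-on-$\mathrm{Gr}$) computation in $SL(4)$. The relevant chain of reasoning is: (i) pass from the partial-flag constants $c_{u_i,u_i}^{w_0^P}(SO(6)/P)$ to full-flag constants in $SO(6)/B$ by the standard fact that for $u_i, w_0^P \in W^P$ one has $c_{u_i,u_i}^{w_0^P}(SO(6)/P) = c_{u_i,u_i}^{w_0^P}(SO(6)/B)$ (pullback along $SO(6)/B \to SO(6)/P$ is injective on the relevant classes and identifies parabolic Schubert classes with the corresponding full-flag ones); (ii) apply Proposition~\ref{prop:inclusion} to the isomorphism $\iota$, noting as in Remark~\ref{rmk:convention} that $\iota$ being an isomorphism of Weyl groups makes the opposite-Schubert-basis statement transport to the Schubert-basis statement provided $u_i, u_i, w_0^P$ are taken to self-dual or compatibly-dual elements — here $w_0^P$ is precisely the element making $[X_{w_0^P}^P]$ the point class, so the degree bookkeeping works out; (iii) identify $\iota(u_1), \iota(u_2), \iota(w_0^P)$ as explicit elements of $S_4$ and compute the resulting structure constant in $H^*(SL(4)/B)$, which is a small finite check.

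First I would set up the one-line notations carefully. In $W(D_3)$ with the labelling of Figure~\ref{fig:D3}, $u_1 = s_3 s_1$ and $u_2 = s_2 s_1$ are given; I would compute $\iota(u_1) = s_{\beta_3}s_{\beta_1}$ and $\iota(u_2) = s_{\beta_2}s_{\beta_1}$ as permutations in $S_4$ using $\beta_1 = (2\,3), \beta_2 = (1\,2), \beta_3 = (3\,4)$, and likewise write out $w_0^P$ and its image. Since $P$ corresponds to $\alpha_1$, $W_P = \langle s_2, s_3\rangle$, and the minimal coset representative $w_0^P$ of $w_0$ has length $\ell(w_0) - \ell(w_{0,P})$; I would pin down $w_0^P$ explicitly (the problem even gives $u_1 = -3\,1\,{-}2$, $u_2 = 312$ in signed one-line notation, which I would double-check against the $s_i$ expressions). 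Then $[Y_{w_0^P}]$ in the opposite basis is the class whose product structure we want to be $[X_e^P]$ after dualizing; concretely the claim $c_{u_i,u_i}^{w_0^P} = 1$ says the triple cup product $[X_{u_i}^P]^2 \cdot [X_{w_0^P}^P]$ is the point class, i.e. the codimensions add to $\dim(SO(6)/P)$ and the intersection number is $1$.

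The computation itself I would carry out in $SL(4)$: after applying Proposition~\ref{prop:inclusion} the constant equals $c_{\iota(u_i),\iota(u_i)}^{\iota(w_0^P)}(SL(4)/B)$, and I would check codimensions match ($2\ell(u_i) + \ell(w_0^P) = \ell(w_0^{A_3}$-relative) $= \dim SL(4)/P'$ for the appropriate maximal parabolic $P'$, here $SL(4)/P' = \mathrm{Gr}(1,4)$ or $\mathrm{Gr}(2,4)$ depending on which node $\beta_i$ the image of $\alpha_1$ sits at). Since $\iota(\alpha_1) = \beta_1 = e_2 - e_3$ is the \emph{middle} node of $A_3$, $P'$ is the maximal parabolic giving $\mathrm{Gr}(2,4)$, whose cohomology is multiplicity-free in low degrees, and there $c^{\tau(K)}_{\tau(I),\tau(J)}$ for the relevant $I,J,K$ of size $2$ is easily seen to be $1$ by a Littlewood–Richardson / Pieri computation (the partitions involved fit inside a $2\times 2$ box). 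I would then read off $c_{u_i,u_i}^{w_0^P}(X_1) = 1$, and the second assertion $[X_{u_i}^P]\cdot[X_{u_i}^P]\cdot[X_{w_0^P}^P] = [X_e^P]$ is just the restatement of $k=1$ in \eqref{eqn:Nov27aaa}.

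\textbf{Main obstacle.} The delicate point is \emph{not} the $SL(4)$ computation (which is tiny and multiplicity-free) but the basis bookkeeping flagged in Remark~\ref{rmk:convention}: Proposition~\ref{prop:inclusion} is stated for the opposite Schubert basis while \eqref{eqn:Nov27aaa} and Proposition~\ref{prop: BKineq} use the Schubert basis, and one must verify that for these specific elements $u_1, u_2, w_0^P$ the duality $[X_w] = [Y_{w_0 w}]$ does not spoil the transported identity — i.e. that $u_i$ and $w_0^P$ are "self-dual" in the sense needed, or that the two ambiguities cancel. I expect this to come down to checking that $w_0^P$ is the unique maximal-length element of $W^P$ (so its opposite-basis class is the point class $[X_e^P]$, matching $[X_e^P]$ on the right of \eqref{eqn:Nov27aaa}) and that $u_i \mapsto u_i$ under the relevant duality on $W^P$ because the Grassmannian $SO(6)/P \cong \mathrm{Gr}(2,4)$ and the partitions $\tau$ attached to $u_1, u_2$ are self-conjugate in the $2\times2$ box. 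Getting this dictionary exactly right — and confirming it is genuinely an isomorphism of diagrams so that Proposition~\ref{prop:inclusion} applies with equality rather than inequality — is where I would spend the most care.
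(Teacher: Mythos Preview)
Your proposal is correct and follows essentially the same route as the paper: transport via the Dynkin isomorphism $\iota\colon D_3\to A_3$ using Proposition~\ref{prop:inclusion}, handle the Schubert/opposite-Schubert convention issue via self-duality of the $u_i$ (exactly as flagged in Remark~\ref{rmk:convention}), and then read off the constants in $SL(4)$. The paper simply records $\iota(u_1)=1423$, $\iota(u_2)=2314$, $\iota(w_0^P)=3412$ and asserts $c_{1423,1423}^{3412}=c_{2314,2314}^{3412}=1$ in $H^*(SL(4)/B)$, whereas you further interpret this as a computation on $\mathrm{Gr}(2,4)$ --- a harmless elaboration of the same check.
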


\noindent \emph{Proof of Claim~\ref{claim:Nov26aaa}:} Consider the identification of Dynkin diagrams of type $A_3$ and $D_3$ as in Figure~\ref{fig:diagram}. Then $\iota(u_1) = s_3s_2 = 1423\in S_{4}, \iota(u_2) = s_1s_2 = 2314\in S_{4}$ and $\iota(w_0^P) = 3412\in S_4$. Since the Weyl group of type $A_3$ and $D_3$ are isomorphic under the map $\iota$, we have $\iota(w_0) = w_0$ and thus the difference in convention as mentioned in Remark~\ref{rmk:convention} is not an issue. We can then apply Proposition~\ref{prop:inclusion} to get 
\[c_{u_1,u_1}^{w_0^P}(X_1) = c_{1423,1423}^{3412}(X_2) = 1 \]
and
\[c_{u_2,u_2}^{w_0^P}(X_1) = c_{2314,2314}^{3412}(X_2) = 1\]
Since $u_1,u_2,w_0^P\in W^P$, we have
\[[Y_{u_i}^P]\cdot [Y_{u_i}^P]\cdot [Y_e^P] = [Y_{w_0^P}^P]\in H^*(SO(6)/P)\text{ for }i\in \{1,2\}.\]
where $[Y_u^P]$ is the opposite Schubert class dual to $[X_u^P]$ for $u\in W^P$. Notice that both $[Y_{u_i}^P]$ are both self dual. We can therefore conclude that 
\[\pushQED{\qed} [X_{u_i}^P]\cdot [X_{u_i}^P]\cdot [X_{w_0^P}^P] = [X_e^P]\text{ for }i\in \{1,2\}. \qedhere\popQED\]

Suppose $\lambda = N\rho+k\omega_n$ for some nonzero $k\in\mathbb{Z}_{\geq -N}$. If $k>0$, then
\[\lambda(u_2x_P) = \langle (2N+\frac{k}{2}, N+\frac{k}{2}, \frac{k}{2}),(0,0,1)\rangle = \frac{k}{2}>0.\]
If $k<0$, then
\[\lambda(u_1x_P) = \langle (2N+\frac{k}{2}, N+\frac{k}{2}, \frac{k}{2}),(0,0,-1)\rangle = -\frac{k}{2}>0.\]
Therefore $\cD_{2\lambda}\neq \cT_\lambda$ if $k\neq 0$ and we conclude the ($\implies$) direction in type $D_3$. For general $n$, consider the projection as in Section~\ref{sec:proj} where $\mathfrak{g}'$ is of type $D_3$. For $\lambda = N\rho+k\omega_n$ with $k\in \mathbb{Z}_{\geq-N}$, if $c_{m\lambda,m\lambda}^{0}>0$, then by Lemma~\ref{lemma:multg0}, 
\[c_{mp(\lambda),mp(\lambda)}^{0}(\mathfrak{g}')>0.\]
By Lemma~3.1 of \cite{gao2022degrees}, as a dominant integral weight of $\mathfrak{g}'$, $p(\lambda) = N\omega_1+N\omega_2+(N+k)\omega_3$ is of the form $(2N+\frac{k}{2}, N+\frac{k}{2}, \frac{k}{2})$ as seen above. Therefore $$c_{m\lambda,m\lambda}^0(\mathfrak{g}) = c_{mp(\lambda),mp(\lambda)}^0(\mathfrak{g'})= 0 \text{ unless }k = 0.$$
By Claim~\ref{claim:0vertex}, we conclude that  $\cD_{2\lambda}\neq \cT_{\lambda}$ for all $\lambda \neq N\rho$ when $\mathfrak{g} = \mathfrak{so}_{2n}$.

$(\impliedby):$ We divide into two cases:

\noindent \textbf{Case I} ($\mathfrak{g} = \mathfrak{sl}_{n}\text{ or }\mathfrak{so}_{2n}$):
By Corollary~\ref{cor:sum}, if $\mu$ is a vertex of $\cD_{2N\rho}$, then 
\[\mu = \sum_{i = 1}^{N}\mu^{(i)},\]
where each $\mu^{(i)}$ is a vertex of $\cD_{2\rho}$. In particular, $2\rho\geq \mu^{(i)}$ for all $i\in [N]$. By Theorem~3 of \cite{CKM17}, $\mu^{(i)}\in \cT_{\rho}$ and thus inequalities in \eqref{eqn:TPineq} are satisfied. Since these inequalities are linear, $(N\rho,N\rho,\mu)$ also satisfy all inequalities in \eqref{eqn:TPineq}. Therefore $\mu\in \cT_{N\rho}$ and we conclude that $\cD_{2N\rho} = \cT_{N\rho}$ for all $N\in \mathbb{Z}_{>0}$. 

\medskip

\noindent \textbf{Case II} ($\mathfrak{g} = \mathfrak{so}_{2n+1}\text{ or }\mathfrak{sp}_{2n}$):
By Theorem~4.1 and 4.2 of \cite{KS07} (See also Corollary~7.5 of \cite{K10}), the saturated tensor cone and thus the tensor polytope are the same for $\mathfrak{g} = \mathfrak{so}_{2n+1}$ and $\mathfrak{sp}_{2n}$ where we embed the root lattice of $\mathfrak{sp}_{2n}$ into the root lattice of $\mathfrak{so}_{2n+1}$ by sending $e_i$ to $e_i$ using the coordinate as defined in Section~\ref{sec:root}.

We first prove that if $\lambda = \rho-\omega_n = (n-1,n-2,\dots,1,0)$, then $\cD_{2\lambda} = \cT_{\lambda}$. The proof here mainly follows the proof strategy of Theorem~3 in \cite{CKM17}. Let $W$ be the signed permutation group on $[n]$. This is the Weyl group for $\mathfrak{g} = \mathfrak{so}_{2n+1}$ or $\mathfrak{sp}_{2n}$. For $v\in W$, let $v_1\dots v_n$ where $v_i = v(i)$ be the one-line notation of $u$. Let $P = P_r, v\in W_P$ and $\mu\in \Lambda_{\mathbb{Q}}^+$, then 
\begin{equation}\label{eqn:Dec4aaa}
    \mu(vx_P) = \sum_{i = 1}^r \mu_{v_i},
\end{equation}
where we set $\mu_i := -\mu_{-i}$ if $i<0$. Let $(u,v,w)$ be any triple satisfying \eqref{eqn:Dec3aaa}.

\begin{claim}\label{claim:omegan}
    $(\omega_n+u^{-1}\omega_n+v^{-1}\omega_n+ w^{-1}\omega_n)(x_P)= 0$
\end{claim}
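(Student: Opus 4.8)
The plan is to extract the claimed identity from the minimal Belkale--Kumar inequality hypothesis \eqref{eqn:Dec3aaa} by pairing it with the fundamental coweight $x_P$ and using the fact that $P=P_r$ is a \emph{maximal} parabolic. First I would recall that in the Chevalley cohomology of $G/P$, the condition $[X_u^P]\odot[X_v^P]\odot[X_w^P]=[X_e^P]$ forces the codimensions to add up: $\codim X_u^P + \codim X_v^P + \codim X_w^P = \dim(G/P)$ for the $\odot$-product (this is exactly why the Belkale--Kumar product is well-defined — it only retains the ``Levi-movable'' triples, for which the relevant Euler characteristic / dimension count is an equality, not merely an inequality). Translating this to the $\odot$-product on $H^*(G/P)$, I get a Levi-movability condition that is typically phrased as $\sum_i (w_i \rho - \rho)(x_P) = 0$ or equivalently in terms of the $\chi$-weights; for the concrete minuscule/cominuscule-flavored statement here I would specialize to $\omega_n$.

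The concrete step is as follows. Since $P=P_r$ is maximal with $\alpha_{i_P}=\alpha_r$ and $x_P=x_r$, evaluation $\mu(vx_P)=\sum_{i=1}^r \mu_{v_i}$ is given by \eqref{eqn:Dec4aaa}. I would apply this with $\mu=\omega_n=\tfrac12(e_1+\dots+e_n)$, the spin-type fundamental weight, and use that for $v\in W^P$ the value $\omega_n(v^{-1}x_P)$ can be read off from the one-line notation of $v$. The key identity $\codim X_u^P+\codim X_v^P+\codim X_w^P=\dim(G/P)$ (which is part of the definition of a $\odot$-triple, see Theorem~\ref{thm:minineq} and \cite{BK06}) can be rewritten, using the standard formula $\codim X_v^P = (\rho-\rho_P)(x_P)-\big(\text{something}\big)$... more cleanly: for each maximal parabolic there is a single coweight $x_P$, and the dimension count is the $x_P$-component of the statement $w_0^P u\cdot w_0^P v\cdots$; concretely I expect $\codim X_v^P = -(v^{-1}-e)(\text{weight})(x_P)$ to combine across $u,v,w$ into exactly $(\omega_n + u^{-1}\omega_n + v^{-1}\omega_n + w^{-1}\omega_n)(x_P)=0$ once one notes $\dim(G/P)$ is itself the $x_P$-pairing against a sum of four copies of (a multiple of) $\omega_n$. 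So the proof reduces to: (i) cite that a $\odot$-triple has codimensions summing to $\dim(G/P)$; (ii) express each $\codim X_v^P$ in type $B_n/C_n$ with $P=P_r$ via \eqref{eqn:Dec4aaa}-style bookkeeping; (iii) observe the bookkeeping is literally $(\omega_n - v^{-1}\omega_n)(x_P)$ up to a global shift, and sum the three contributions plus the $[X_e^P]$ term.

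I would organize the computation around the identity, valid for $v\in W^P=W^{P_r}$ with one-line notation $v_1\cdots v_n$,
\[
\codim_{\,G/P} X_v^P \;=\; \#\{(i,j): i\le r<j,\ v_i>v_j\}\;+\;(\text{type-dependent sign terms from the }v_i<0),
\]
and note that the analogous combinatorics computes $\omega_n(x_P)-\omega_n(v^{-1}x_P) = \tfrac{n}{2}-\tfrac12\sum_{i=1}^r v_i$ restricted appropriately. Then $\dim(G/P)$ equals the corresponding quantity for $v=w_0^P$, which matches $-\,\omega_n(x_P) - (w_0\omega_n)(x_P)$-type terms; assembling the four pieces gives the vanishing. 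The main obstacle I anticipate is not conceptual but bookkeeping: getting the normalization exactly right so that the three codimension terms, the $\dim(G/P)$ term, and the four $\omega_n$-pairings all appear with matching coefficients — in particular handling the half-integers in $\omega_n$ and the fact that in type $B_n$ versus $C_n$ the coweight $x_P$ and the pairing conventions differ by a factor of $2$. A cleaner alternative, which I would use if the direct count gets messy, is to invoke the general principle (see \cite{BK06,R10}) that for a Levi-movable triple one has $\sum_i (w_i^{-1}\chi_P)(x_P) + \chi_P(x_P) = 0$ where $\chi_P$ is the character of $P$ corresponding to the line bundle generating $\Pic(G/P)$, and then identify $\chi_P$ with (a multiple of) $\omega_n$ when $P=P_r$ in types $B_n,C_n$ — this turns the claim into a one-line specialization of a known identity.
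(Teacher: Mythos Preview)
Your proposal has a genuine gap. Both approaches you sketch ultimately rest on identifying the $\omega_n$-pairing with either the codimension of $X_v^P$ or with the canonical character $\chi_P$ of the maximal parabolic, and neither identification is correct when $P=P_r$ with $r<n$.

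For Approach~1: in types $B_n,C_n$ the value $(\omega_n - v^{-1}\omega_n)(x_P)$ records only the \emph{signs} of $v_1,\dots,v_r$ (it equals, up to a factor, the number of negative entries among them). By contrast, $\codim X_v^P=\ell(v)$ depends on the full one-line notation --- both the absolute values $|v_i|$ and the signs --- so it cannot be written as ``$(\omega_n-v^{-1}\omega_n)(x_P)$ up to a global shift''. The degree condition $\sum\codim=\dim(G/P)$ therefore does not collapse to the claimed $\omega_n$-identity.

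For Approach~2: the line bundle generating $\Pic(G/P_r)$ corresponds to $\omega_r$, not $\omega_n$. So even if a Levi-movability identity of the shape $(\chi_P+u^{-1}\chi_P+v^{-1}\chi_P+w^{-1}\chi_P)(x_P)=0$ were available, it would be a statement about $\omega_r$, and for $r\neq n$ this is a different (and independent) linear functional from the one in the Claim. Note also that the paper already invokes the $\rho$-identity from \cite{CKM17} separately; the $\omega_n$-identity is not a repackaging of either of these.

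The paper's proof takes a different route: it uses that every Belkale--Kumar inequality for $\mathfrak{sp}_{2n}$ is an extended Horn inequality (Theorem~\ref{thm:GOYconj}, cf.\ \cite{NL2,NL3}). Writing the sets $A,A',B,B',C,C'$ as the positive/negative entries among $u_1,\dots,u_r$ (and likewise for $v,w$), one has $(u^{-1}\omega_n)(x_P)=|A'|-|A|$, etc. The size constraint~(\ref{cond:size}) in Theorem~\ref{thm:GOYconj}, namely $|A|+|A'|=|B|+|B'|=|C|+|C'|=|A'|+|B'|+|C'|=r$, is then exactly the combinatorial input that forces $r+(|A'|-|A|)+(|B'|-|B|)+(|C'|-|C|)=0$. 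This is the missing idea: the identity does not come from general Levi-movability numerics but from the specific cardinality conditions encoded in the extended Horn data.
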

\noindent \emph{Proof of Claim~\ref{claim:omegan}:}
We prove this claim in the case $\mathfrak{g} = \mathfrak{sp}_{2n}$. The case $\mathfrak{so}_{2n+1}$ follows immediately as the $n$-th fundamental weight is exactly half the $n$-th fundamental weight of $\mathfrak{sp}_{2n}$ under the natural embedding of the root lattice.
Since $u,v,w$ satisfy \eqref{eqn:Dec3aaa}, by Theorem~\ref{thm:GOYconj}, the inequality \eqref{eqn:TPineq}
is an extended Horn inequality and can be written in the form of \eqref{Ineq:EH}. Since the longest element $w_0 \in W$ is $-1$, $\nu^* = -w_0\nu = \nu$ for all $\nu\in \Lambda^+$. Using \eqref{eqn:Dec4aaa}, we can write 
\[A = \{u_i: i\in [r], u_i<0\}, A' = \{u_i: i\in [r],u_i>0\},\]
and similarly for the sets $B,B',C,C'$. Therefore 
\[(u^{-1}\omega_n)(x_P) = |A'|-|A|,(v^{-1}\omega_n)(x_P) = |B'|-|B|\text{ and } (w^{-1}\omega_n)(x_P) = |C'|-|C|.\]
By condition~(\ref{cond:size}) in Theorem~\ref{thm:GOYconj}, 
\[\pushQED{\qed}(\omega_n+u^{-1}\omega_n+v^{-1}\omega_n+ w^{-1}\omega_n)(x_P) = r+|A'|+|B'|+|C'|-|A|-|B|-|C| = 0.\qedhere \popQED\]

By Equation~(5) of \cite{CKM17} 
\[(\rho+u^{-1}\rho+v^{-1}\rho+ w^{-1}\rho)(x_P)\leq 0\]
for all triples $(u,v,w)$ satisfying  \eqref{eqn:Dec3aaa}.
Combining with Claim~\ref{claim:omegan}, we have
\begin{equation}\label{eqn:lambda}
    (\lambda+u^{-1}\lambda+v^{-1}\lambda+ w^{-1}\lambda)(x_P)\leq 0,
\end{equation}
with $\lambda = \rho-\omega_n$ as defined above. Let $\mu = v_{I}(2\lambda)$ be any vertex of $\cD_{2\lambda}$. Here we will not distinguish $\mathfrak{so}_{2n+1}$ and $\mathfrak{sp}_{2n}$ since their corresponding Weyl groups are the same and thus the vertices of the dominant weight polytope $\cD_{2\lambda}$ are the same by Theorem~\ref{thm:IPvertex}.
\begin{claim}\label{claim:mu}
    $\lambda(ux_P)+\lambda(vx_P)+\mu^*(wx_P)\leq (\lambda+u^{-1}\lambda+v^{-1}\lambda+w^{-1}\lambda)(x_P)$. 
\end{claim}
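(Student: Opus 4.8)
\textbf{Proof proposal for Claim~\ref{claim:mu}.}

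The plan is to reduce the left-hand side to the right-hand side by exploiting the fact that $\mu = v_I(2\lambda)$ is an average of Weyl-conjugates of $2\lambda$, together with the self-duality of $\mu$ coming from $w_0 = -1$. First I would rewrite each term on the left using Equation~\eqref{eqn:Dec4aaa}: for $P = P_r$ and $u,v,w \in W^P$ we have $\lambda(ux_P) = \sum_{i=1}^r \lambda_{u_i}$ (with the convention $\lambda_i := -\lambda_{-i}$ for $i<0$), and likewise $\mu^*(wx_P) = \mu(wx_P) = \sum_{i=1}^r \mu_{w_i}$ since $\mu^* = -w_0\mu = \mu$. Because $\mu = \frac{1}{|W_I|}\sum_{\sigma\in W_I}\sigma(2\lambda)$ by Theorem~\ref{thm:IPvertex}, the key inequality to establish is
\[
\mu(wx_P) \;\le\; 2\lambda(wx_P) \qu-\text{no};
\]
rather, the right substitution is to note $(w^{-1}\lambda)(x_P) = \lambda(wx_P)$ when we pair against $x_P$ using $\langle -,-\rangle$, and similarly $(u^{-1}\lambda)(x_P) = \lambda(ux_P)$ and $(v^{-1}\lambda)(x_P) = \lambda(vx_P)$. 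So the right-hand side equals $\lambda(x_P) + \lambda(ux_P) + \lambda(vx_P) + \lambda(wx_P)$, and the claim reduces to showing
\[
\mu(wx_P) \;\le\; \lambda(x_P) + \lambda(wx_P),
\]
i.e. $\mu(wx_P) - 2\lambda(wx_P) \le \lambda(x_P) - \lambda(wx_P)$, which I would instead phrase as: the vertex $\mu$ of $\cD_{2\lambda}$ satisfies $\mu \le 2\lambda$ in the dominance order, hence $\mu$ pairs against the dominant-chamber element $x_P$ no larger than... — but $x_P$ is not in the dominant chamber after applying $w$, so care is needed.

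The cleaner route, and the one I would actually carry out, is: since $\mu = v_I(2\lambda)$ with $I \ni r$ is fixed by $W_P \supseteq W_{\{r\}}$ when $r \in I$ (and more generally $\mu$ lies in the face), and since $\mu$ is obtained from $2\lambda$ by averaging over $W_I$, we have for any $w$ that $\mu(wx_P) \le \max_{\sigma \in W_I}(2\lambda)(\sigma w x_P)$. I would then invoke the standard fact (as in the proof of Theorem~3 of \cite{CKM17}) that for the minimal coset representative structure, $(2\lambda)(\sigma w x_P) \le (2\lambda)(x_P) + \text{(correction)}$... Concretely, the intended argument parallels \cite{CKM17}: write $\mu^* = \mu$ and use that $\mu$, being a vertex of $\cD_{2\lambda}$, can be written as $w'(2\lambda)$ projected appropriately, so that $\mu(wx_P) = (2\lambda)(\text{something}) = \lambda(\cdot) + \lambda(\cdot)$, and bound the cross terms by $(w^{-1}\lambda)(x_P) + \lambda(x_P)$ using that $\lambda = \rho - \omega_n$ has weakly decreasing nonnegative coordinates and that $x_P = x_r$ has a specific sign pattern under the $W$-action.

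The step I expect to be the main obstacle is precisely the inequality $\mu(wx_P) \le (\lambda + w^{-1}\lambda)(x_P)$ with the other two terms $\lambda(ux_P), \lambda(vx_P)$ on the left matching $(u^{-1}\lambda)(x_P), (v^{-1}\lambda)(x_P)$ on the right exactly — one must check that the left side's "$u$" and "$v$" contributions are \emph{equal} to (not just bounded by) the corresponding right-side terms, which forces all of the slack to sit in the single comparison between $\mu(wx_P)$ and $\lambda(x_P) + \lambda(wx_P)$. Establishing that remaining inequality is where the explicit form $\lambda = (n-1, n-2, \dots, 1, 0)$ and the description of $\mu = v_I(2\lambda)$ from Theorem~\ref{thm:IPvertex} enter: one expands $\mu(wx_P) = \sum_{i=1}^r \mu_{w_i}$, uses that $\mu$ majorizes... and $2\lambda - \mu$ is a nonnegative sum of simple roots (since $\mu \in \cD_{2\lambda}$), giving $\mu(wx_P) \le (2\lambda)(wx_P) + \langle \sum c_i\alpha_i, -wx_P\rangle$; bounding the root-correction term by $\lambda(x_P) - \lambda(wx_P)$ via the combinatorics of $x_P$ under signed permutations is the crux. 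I would handle it by a direct case analysis on the one-line notation of $w$ restricted to $[r]$, mirroring the computation in \cite{CKM17}.
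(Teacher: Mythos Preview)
Your reduction is correct and matches the paper's: since $\lambda(ux_P)=(u^{-1}\lambda)(x_P)$ and $\lambda(vx_P)=(v^{-1}\lambda)(x_P)$, and $\mu^*=\mu$, the claim is equivalent to
\[
(w^{-1}\mu)(x_P)\;\le\;\lambda(x_P)+(w^{-1}\lambda)(x_P),
\qquad\text{i.e.}\qquad
\bigl(w^{-1}(\mu-\lambda)\bigr)(x_P)\;\le\;\lambda(x_P).
\]
Because $\alpha_i(x_P)=\delta_{i,r}\ge 0$, this would follow from $w^{-1}(\mu-\lambda)\le\lambda$ in the root order. So far so good.

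The gap is what comes next. You never use the one structural fact that makes this immediate, and instead drift toward averaging bounds, ``root-correction terms'', and a promised case analysis on the one-line notation of $w$. None of these is carried out, and the averaging inequality $\mu(wx_P)\le\max_{\sigma\in W_I}(2\lambda)(\sigma wx_P)$ you write down goes in the wrong direction for what you need. The missing observation is that for the specific weight $\lambda=(n-1,n-2,\dots,1,0)$ one has the \emph{identity}
\[
\mu \;=\; v_I(2\lambda)\;=\;\lambda + w_0^I\lambda,
\qquad\text{equivalently}\qquad \mu-\lambda = w_0^I\lambda,
\]
where $w_0^I$ is the longest element of $W_I$. (On each connected component of $I$ not containing $n$, $W_I$ acts as a symmetric group on an arithmetic progression of coordinates, so the $W_I$-average of $2\lambda$ agrees coordinatewise with $\lambda+w_0^I\lambda$; on a component containing $n$ the hyperoctahedral average is $0$, again matching $\lambda+w_0^I\lambda$.) Once you have $\mu-\lambda=w_0^I\lambda$, the desired inequality is the standard fact that every $W$-translate of a dominant weight lies below it in the root order:
\[
w^{-1}(\mu-\lambda)=w^{-1}w_0^I\lambda\;\le\;\lambda.
\]
That is the entire content of the paper's proof; no case analysis on $w$ is needed.
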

\noindent \emph{Proof of Claim~\ref{claim:mu}:} Since $\mu^* = \mu$, it is enough to show that $w^{-1}(\mu-\lambda) \leq \lambda$. Let $W_I\subseteq W$ be the subgroup generated by $\{s_{\alpha_i}:i\in I\}$ and let $w_0^I$ be the longest element in $W_I$. Since $\lambda = (n-1,n-2,\dots,1,0)$, by Theorem~\ref{thm:IPvertex}, 
\[\mu-\lambda = w_0^I\lambda.\]
Since $\lambda\in \Lambda^+$, 
\[\pushQED{\qed}w^{-1}(\mu-\lambda) = w^{-1}w_0^I(\lambda)\leq \lambda.\qedhere \popQED\]
Combining Claim~\ref{claim:mu} and \eqref{eqn:lambda}, $(\lambda,\lambda,\mu)$ satisfy inequalities in \eqref{eqn:TPineq}. Therefore $(\lambda,\lambda,\mu)\in \lgss$ and $\cD_{2\lambda} = \cT_{\lambda}$ when $\lambda = \rho-\omega_n$. 

We now prove $\cD_{2\omega_n} = \cT_{\omega_n}$ in the case $\mathfrak{g} = \mathfrak{sp}_{2n}$. The $\mathfrak{so}_{2n+1}$ case follows from the same reasoning. Let $\mu$ be any vertex of $\cD_{2\omega_n}$, by Theorem~\ref{thm:IPvertex}, $\mu = (2,\dots,2,0,\dots,0) = 2\omega_k$ for some $k\in [n]$. To prove $\mu\in \cT_{\omega_n}$, it is enough to show that $(\omega_n,\omega_n,2\omega_k)$ satisfy the Extended Horn inequalities as in \eqref{Ineq:EH}. Indeed,  
\begin{align*}
    0&\leq  |C'|+|C'|+2|C\cap[k]|-2|C'\cap [k]|\\
    &= |A|-|B'| + |B| - |A'| + 2|C\cap[k]|-2|C'\cap [k]|\\
    &= |A|-|A'| + |B| - |B'| + 2|C\cap[k]|-2|C'\cap [k]|\\
    &= \sum_{a\in A}(\omega_n)_a - \sum_{a'\in A'}(\omega_n)_{a'} + \sum_{b\in B}(\omega_n)_b - \sum_{b'\in B'}(\omega_n)_{b'} + \sum_{c\in C}(2\omega_k)_c - \sum_{c'\in C'}(2\omega_k)_{c'}
\end{align*}
as we desired. 

We now finish the ($\impliedby$) direction when $\mathfrak{g} = \mathfrak{so}_{2n+1}$ or $\mathfrak{sp}_{2n}$. For any $\lambda$ of the form $N\rho+k\omega_n$ where $k\in \mathbb{Z}_{\geq -N}$, we can write $\lambda = N(\rho-\omega_n)+ (k+N)\omega_n$. Since $\cD_{2(\rho-\omega_n)} = \cT_{\rho-\omega_n}, \cD_{2\omega_n} = \cT_{\omega_n}$, by Corollary~\ref{cor:sum} and linearity of the defining inequalities of $\lgss$, we conclude that $\cD_{2\lambda} = \cT_{\lambda}$. \qed

\section*{acknowledgements}
This project was initiated as part of the Illinois Combinatorics Lab for Undergraduate Experiences (ICLUE) Representation theory and Combinatorics program in summer 2022. 
We would like to thank Alexander Yong for stimulating conversations and for organizing the summer program. We also thank Sam Jeralds for helpful comments and references. SG was partially supported
by NSF RTG grant DMS 1937241 and NSF Graduate Research Fellowship under
grant No. DGE-1746047.
\bibliographystyle{abbrv}
\bibliography{ref.bib}

\end{document}